\newtheorem{Th}{Theorem}[section]
\newtheorem{Prop}[Th]{Proposition}
\newtheorem{Lem}[Th]{Lemma}
\newcommand{\R}{\mathbb{R}}
\newcommand{\cA}{{\mathcal A}}
\newcommand{\cC}{{\mathcal C}}
\newcommand{\cD}{{\mathcal D}}
\newcommand{\cI}{{\mathcal I}}
\newcommand{\cJ}{{\mathcal J}}
\newcommand{\cN}{{\mathcal N}}
\newcommand{\cS}{{\mathcal S}}
\newcommand{\Ga}{\Gamma}
\newcommand{\Om}{\Omega}
\newcommand{\dist}{\mathrm{dist}}
\newcommand{\weakto}{\rightharpoonup}
\numberwithin{equation}{section}
\newcommand{\supp}{\mathrm{supp}\,}
\begin{document}

\title{Elliptic problems with mixed nonlinearities and potentials singular at the origin and at the boundary of the domain}

\author[B. Bieganowski]{Bartosz Bieganowski}
	\address[B. Bieganowski]{\newline\indent
			Faculty of Mathematics, Informatics and Mechanics, \newline\indent
			University of Warsaw, \newline\indent
			ul. Banacha 2, 02-097 Warsaw, Poland}	
			\email{\href{mailto:bartoszb@mimuw.edu.pl}{bartoszb@mimuw.edu.pl}}	

\author[A. Konysz]{Adam Konysz}
\address[A. Konysz]{\newline\indent  	Faculty of Mathematics and Computer Science,		\newline\indent Nicolaus Copernicus University, \newline\indent ul. Chopina 12/18, 87-100 Toru\'n, Poland}
\email{\href{mailto:adamkon@mat.umk.pl}{adamkon@mat.umk.pl}}

\date{}	\date{\today}

\begin{abstract} 
We are interested in the following Dirichlet problem
$$
\left\{ \begin{array}{ll}
-\Delta u + \lambda u - \mu \frac{u}{|x|^2} - \nu \frac{u}{\dist(x,\R^N \setminus \Omega)^2} = f(x,u)  & \quad \mbox{in } \Omega \\
u = 0 & \quad \mbox{on } \partial \Omega,
\end{array} \right.
$$
on a bounded domain $\Omega \subset \R^N$ with $0 \in \Omega$. We assume that the nonlinear part is superlinear on some closed subset $K \subset \Omega$ and asymptotically linear on $\Omega \setminus K$. We find a
solution with the energy bounded by a certain min-max level, and infinitely many solutions provided that $f$ is odd in $u$. Moreover we study also the multiplicity of solutions to the associated normalized problem.

\medskip

\noindent \textbf{Keywords:} variational methods, singular potential, nonlinear Schr\"odinger equation,  multiplicity of solutions
   
\noindent \textbf{AMS Subject Classification:} 35Q55, 35A15, 35J20, 58E05
\end{abstract}

\maketitle

\pagestyle{myheadings} \markboth{\underline{B. Bieganowski, A. Konysz}}{
		\underline{Elliptic problems with singularities and mixed nonlinearities}}

\section{Introduction}

We are interested in the problem
\begin{equation}\label{eq}
\left\{ \begin{array}{ll}
-\Delta u + \lambda u - \mu \frac{u}{|x|^2} - \nu \frac{u}{\dist(x,\R^N \setminus \Omega)^2} = f(x,u)  & \quad \mbox{in } \Omega \\
u = 0 & \quad \mbox{on } \partial \Omega,
\end{array} \right.
\end{equation}
where $\lambda, \mu \in \R$ are real parameters, $f : \Omega \times \R \rightarrow \R$, $\Omega \subset \R^N$ is a bounded domain in $\R^N$ with $0 \in \Omega$, and $K \subset \Omega$ is a closed set with $| \mathrm{int}\, K | > 0$.

Semilinear problems of general form
$$
-\Delta u = h(x,u)
$$
appear when one looks for stationary states of time-dependent problems, including the \textit{heat equation} $\frac{\partial u}{\partial t} - \Delta u = h(x,u)$ or the \textit{wave equation} $\frac{\partial^2 u}{ \partial t^2} - \Delta u = h(x,u)$. In nonlinear optics the \textit{nonlinear Schr\"odinger equation} is studied
\begin{equation}\label{eq:schroed}
\mathbf{i} \frac{\partial \Psi}{\partial t} + \Delta \Psi = h(x, |\Psi|) \Psi, \quad (t,x) \in \R \times \Omega
\end{equation}
and looking for standing waves $\Psi(t,x) = e^{i\lambda t} u(x)$ leads then to a semilinear problem.

The time-dependent equation \eqref{eq:schroed} appears in physical models in the case of bounded domains $\Omega$ (\cite{Fibich, Fibich2, Zuazua}), as well as in the case $\Omega = \R^N$ (\cite{Dorfler, Nie}). Two points of view of solutions to \eqref{eq} are possible; either $\lambda$ may be prescribed or may be considered as a part of the unknown. In the latter case a natural additional condition is the prescribed mass $\int_\Omega u^2 \, dx$. In the paper we will consider both cases, namely we will look for solutions for the unconstrained problem \eqref{eq} as well as the constrained one, see \eqref{eq:normalized} below. 

The equation \eqref{eq} (and systems of such equations) on bounded domains has been studied in the presence of bounded potentials \cite{B2} and singular at the origin \cite{Gao}, see also \cite{Felli, GuoMed, Kostenko} for the case  of unbounded $\Omega$. Its constrained counterpart without the potential has been studied e.g. in \cite{Noris,PV}, where \eqref{eq:normalized} was studied with $f(x,u)=|u|^{p-2}u$, $\nu=\mu=0$ in the mass-subcritical, mass-critical and mass-supercritical cases. In this paper we are interested in the presence of a potential 
$$
V(x) = -\frac{\mu}{|x|^2} - \frac{\nu}{\mathrm{dist}\,(x, \mathbb{R}^N \setminus \Omega)^2}
$$
which is singular in $\Omega$ as well as on the whole boundary $\partial \Omega$. We mention here that Schr\"odinger operators were studied with potentials being singular at the point on the boundary \cite{Chen}, as well as with potentials being singular on the whole boundary \cite{Tai, Tai2}.
We assume that $\Omega$ is a domain satisfying the following condition
\begin{enumerate}
\item[(C)] $-\Delta d \geq 0$ in $\Omega$, in the sense of distributions, where $d(x) := \dist (x, \R^N \setminus \Omega)$.
\end{enumerate}
This condition allows us to study the singular potential by means of Hardy-type inequalities (Section \ref{sect:2}). As we will see in Section \ref{sect:2} (see Proposition \ref{prop:convex}), any convex domain $\Omega$ satisfies (C).

We impose the following condition on parameters appearing in the problem
\begin{enumerate}
\item[(N)] $\mu,\nu \geq 0$, $\frac{\mu}{(N-2)^2} +  \nu < \frac{1}{4}$, $N \geq 3$.
\end{enumerate}

On the nonlinear part of \eqref{eq} we propose the following assumptions.

\begin{enumerate}
\item[(F1)] $f : \Omega \times \R \rightarrow \R$ is a Carath\'eodory function and there is $2 < p < 2^*$ such that
$$
|f(x,u)| \lesssim 1+|u|^{p-1}, \quad u \in \R, \ x \in \Omega
$$
\item[(F2)] $f(x,u) = o(u)$ uniformly in $x \in \Omega$ as $u \to 0$
\item[(F3)] $F(x,u)/|u|^2 \to +\infty$ as $|u| \to +\infty$ for $x \in K$, where $F(x,u) := \int_0^u f(x,s) \, ds$;
\item[(F4)] $f(x,u)/|u|$ is nondecreasing on $(-\infty, 0)$ and on $(0,\infty)$
\item[(F5)] $f(x,u)=\Theta(x)u$ for sufficiently large $|u|$ and $x \in \Omega \setminus K$, where $\Theta \in L^\infty(\Omega \setminus K)$.
\end{enumerate}

A simple example satisfying all foregoing conditions is the following
$$
f(x,u) = \left\{ \begin{array}{ll}
\Gamma(x) |u|^{p-2}u,     & \quad x \in K, \\
\frac{|u|^2}{1+|u|^2} u \chi_{|u| \leq 1} + \frac12 u \chi_{|u| > 1}    & \quad x \in \Omega \setminus K,
\end{array} \right.
$$
where $\Gamma\in L^\infty(K)$ is a nonnegative function.

To show the boundedness of minimizing sequences to the problem \eqref{eq} we impose the following abstract condition
\begin{enumerate}
\item[(A)] $-\lambda$ is not an eigenvalue of $-\Delta - \frac{\mu}{|x|^2} - \frac{\nu}{d(x)^2} - \Theta(x)$ with Dirichlet boundary conditions on $L^2(\Omega \setminus K)$.
\end{enumerate}
As we will see in Section \ref{sect:2}, (A) is satisfied if e.g. $\lambda \geq |\Theta|_\infty$ (cf. Theorem \ref{th:spectr}).

\begin{Th}\label{th:main1}
Suppose that (C), (N), (F1)--(F5), (A) are satisfied and $\lambda \geq 0$. Then there is a nontrivial weak solution to \eqref{eq} with the energy level $c$ satisfying \eqref{gsl}. 
\end{Th}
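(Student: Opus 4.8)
The plan is to realise solutions of \eqref{eq} as critical points of the energy functional on $X:=H^1_0(\Omega)$ and to produce one at the min-max level \eqref{gsl} by a mountain-pass scheme. First I would fix the functional framework. Using the classical Hardy inequality for the term $\int_\Omega u^2/|x|^2$, the boundary Hardy inequality (available thanks to condition (C), as in Section~\ref{sect:2}) for the term $\int_\Omega u^2/d(x)^2$, and the strict bound in (N), namely $\tfrac{4\mu}{(N-2)^2}+4\nu<1$, I would show that the quadratic form
$$
\|u\|^2:=\int_\Omega |\nabla u|^2+\lambda u^2-\mu\frac{u^2}{|x|^2}-\nu\frac{u^2}{d(x)^2}\,dx
$$
is positive definite and equivalent to the usual norm of $H^1_0(\Omega)$; here $\lambda\ge 0$ only helps. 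On $(X,\|\cdot\|)$ the functional $J(u)=\tfrac12\|u\|^2-\int_\Omega F(x,u)\,dx$ is well defined and of class $C^1$ by (F1)--(F2) together with the compact embeddings $X\hookrightarrow L^q(\Omega)$ for $2\le q<2^*$.

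Next I would check the mountain-pass geometry. From (F2) and (F1) one has $\int_\Omega F(x,u)\,dx=o(\|u\|^2)+O(\|u\|^p)$, whence $J(u)\ge\alpha>0$ on a small sphere $\|u\|=\rho$. For a descent direction I would take a fixed $u_0\neq 0$ with $\supp u_0\subset\mathrm{int}\,K$ (possible since $|\mathrm{int}\,K|>0$); the superlinearity (F3) then gives $J(tu_0)\to-\infty$ as $t\to+\infty$. This yields the geometry, makes the level $c$ in \eqref{gsl} finite with $0<\alpha\le c<\infty$, and produces, via a standard deformation argument, a Cerami sequence $(u_n)\subset X$ with $J(u_n)\to c$ and $(1+\|u_n\|)\,\|J'(u_n)\|_{X^*}\to 0$.

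The main obstacle is the boundedness of $(u_n)$, and this is exactly where (A) and the monotonicity (F4) are used; the asymptotically linear behaviour off $K$ rules out the Ambrosetti--Rabinowitz argument. I would argue by contradiction: assuming $\|u_n\|\to\infty$, set $w_n:=u_n/\|u_n\|$, so that up to a subsequence $w_n\weakto w$ in $X$ and $w_n\to w$ in $L^q(\Omega)$, $2\le q<2^*$, and a.e. Since $\int_\Omega F(x,u_n)/\|u_n\|^2\,dx\to\tfrac12$ is bounded while on $\{w\neq0\}\cap K$ one has $|u_n|\to\infty$ and $F(x,u_n)/|u_n|^2\to\infty$ by (F3), Fatou's lemma forces $w=0$ a.e. on $K$. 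Two cases remain. If $w\neq0$, then dividing $J'(u_n)\to0$ by $\|u_n\|$ and using (F5) with the compact embedding shows that $w$ is a nontrivial Dirichlet eigenfunction of $-\Delta-\frac{\mu}{|x|^2}-\frac{\nu}{d^2}-\Theta$ on $\Omega\setminus K$ with eigenvalue $-\lambda$, contradicting (A). If $w\equiv0$, I would invoke Jeanjean's monotonicity trick: for each $R>0$ the vanishing $w_n\to0$ in $L^q$ gives $J(Rw_n)\to R^2/2$, so the maximum $\max_{t\in[0,1]}J(tu_n)$ tends to $+\infty$; on the other hand (F4) makes $u\mapsto\tfrac12 f(x,u)u-F(x,u)$ nondecreasing in $|u|$, which bounds that maximum by $J(u_n)-\tfrac12\langle J'(u_n),u_n\rangle\to c<\infty$, a contradiction. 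Hence $(u_n)$ is bounded.

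Finally, with $(u_n)$ bounded I would extract $u_n\weakto u$ in $X$, with $u_n\to u$ in $L^q(\Omega)$, $2\le q<2^*$, and a.e.; the compactness of these embeddings makes $f(\cdot,u_n)$ converge in the relevant dual, so passing to the limit in $J'(u_n)\to0$ gives $J'(u)=0$, i.e.\ $u$ is a weak solution of \eqref{eq}. Testing $\langle J'(u_n)-J'(u),u_n-u\rangle\to0$ and using that the lower-order term is compact while the principal part reproduces $\|u_n-u\|^2$, I obtain strong convergence $u_n\to u$ and thus $J(u)=c>0$; in particular $u\neq0$ and its energy equals the min-max level \eqref{gsl}. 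I expect the third step---excluding the unbounded regime, with (A) handling the asymptotically linear escape and (F4) the vanishing one---to be the genuine difficulty, the remaining passage to the limit being routine on the bounded domain $\Omega$.
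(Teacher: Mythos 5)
Your proposal follows essentially the same route as the paper: the equivalent Hardy-based norm, the mountain-pass/Cerami scheme with the descent direction taken in $H^1_0(\mathrm{int}\,K)$, and the same three-way dichotomy for an unbounded Cerami sequence, in which (F3) with Fatou excludes concentration on $K$, condition (A) excludes the asymptotically linear escape on $\Omega\setminus K$, and the monotonicity coming from (F4) (the paper's condition (J3)) excludes the vanishing case. Your final strong-convergence step, which yields $\cJ(u)=c>0$ and hence nontriviality and the energy level \eqref{gsl}, is in fact slightly more explicit than the paper's own conclusion, which only passes to the limit in $\cJ'(u_n)\to 0$ via the Vitali convergence theorem.
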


\begin{Th}\label{th:main2}
Suppose that (C), (N), (F1)--(F5), (A) hold, $\lambda \geq 0$, and $f$ is odd in $u \in \R$. Then there is infinitely many weak solutions to \eqref{eq}.
\end{Th}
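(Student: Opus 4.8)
The plan is to obtain weak solutions of \eqref{eq} as critical points of the even energy functional and to produce infinitely many of them by a symmetric ($\Z_2$-genus) min-max scheme. Working in $H := H^1_0(\Om)$ equipped with the norm
$$
\|u\|^2 = \int_\Om |\nabla u|^2 + \lambda u^2 - \frac{\mu u^2}{|x|^2} - \frac{\nu u^2}{d(x)^2}\,dx, \qquad d(x) := \dist(x,\R^N\setminus\Om),
$$
I would first invoke the Hardy-type inequalities of Section~\ref{sect:2}: under (C), (N) and $\lambda\ge 0$ this quadratic form is positive definite and equivalent to the standard $H^1_0$-norm, so $(H,\|\cdot\|)$ is a Hilbert space. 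The functional $\cJ(u) = \tfrac12\|u\|^2 - \int_\Om F(x,u)\,dx$ is then $C^1$ with $\cJ(0)=0$, and its critical points are exactly the weak solutions of \eqref{eq}. Since $f$ is odd in $u$, the primitive $F$ is even in $u$, hence $\cJ$ is even. This is the structural feature that Theorem~\ref{th:main1} does not use and that makes multiplicity accessible.

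Next I would establish the Palais--Smale condition. Because $p<2^*$ and $\Om$ is bounded, $H\hookrightarrow L^p(\Om)$ is compact, so $u\mapsto \int_\Om F(x,u)\,dx$ and its derivative are weakly sequentially continuous; thus once a (PS) sequence $u_n$ is bounded, testing $\cJ'(u_n)\to 0$ against $u_n-u$ (with $u_n\rightharpoonup u$) gives $\|u_n-u\|\to 0$. Boundedness of (PS) sequences is precisely where (A) enters, by the same argument used to establish Theorem~\ref{th:main1}: if $\|u_n\|\to\infty$, set $v_n := u_n/\|u_n\|$, $v_n\rightharpoonup v$; dividing the equation by $\|u_n\|$ and using (F3) one shows $v\equiv 0$ a.e.\ on $K$ (otherwise $\int_K F(x,u_n)/\|u_n\|^2\to\infty$, contradicting the boundedness of $\cJ(u_n)/\|u_n\|^2$), while on $\Om\setminus K$ the asymptotic linearity (F5) forces $v$ to be an eigenfunction of $-\Delta - \mu/|x|^2 - \nu/d(x)^2 - \Theta$ for the eigenvalue $-\lambda$, contradicting (A). Hence $\cJ$ satisfies (PS) on all of $H$.

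For the geometry I would verify two facts. Positivity on a small sphere: by (F2) and (F1) one has $F(x,u)=o(u^2)$ as $u\to 0$ and $|F(x,u)|\lesssim u^2+|u|^p$, so $\int_\Om F(x,u)\,dx = o(\|u\|^2)$ as $\|u\|\to 0$, and there are $\rho,\alpha>0$ with $\cJ(u)\ge\alpha$ for all $\|u\|=\rho$. For the descending directions I would exploit $|\mathrm{int}\,K|>0$: fix an increasing sequence of finite-dimensional subspaces $E_1\subset E_2\subset\cdots$ with $E_n\subset H^1_0(\mathrm{int}\,K)$ and $\dim E_n=n$, extended by zero to $H$. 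Every $u\in E_n$ is supported in $K$, so the full superlinearity (F3) applies; on the compact unit sphere of $E_n$ this yields $\int_\Om F(x,tu)\,dx/t^2\to+\infty$ as $t\to\infty$ uniformly, whence $\cJ(tu)\to-\infty$ and there is $R_n>\rho$ with $\cJ\le 0$ on $E_n\setminus B_{R_n}$.

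With $\cJ$ even, $\cJ(0)=0$, (PS), the sphere estimate, and the flag $\{E_n\}$ in hand, I would apply a $\Z_2$-genus (symmetric mountain-pass) multiplicity theorem to obtain an unbounded sequence of critical values $c_n\to+\infty$, hence infinitely many pairs $\pm u_n$ of distinct weak solutions. The genuine obstacle is that the classical symmetric mountain-pass theorem requires $\cJ\to-\infty$ on \emph{every} finite-dimensional subspace, which fails here: on subspaces of functions vanishing on $K$ only the asymptotically linear regime (F5) is felt and, when $\Theta$ is small, $\cJ(tu)\to+\infty$. I would circumvent this by confining the descending subspaces to the infinite-dimensional space $H^1_0(\mathrm{int}\,K)$, on which the problem is genuinely superlinear, and checking that the min-max sets built on $\{E_n\}$ still link the sphere $\partial B_\rho$. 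The delicate point is verifying $c_n\to+\infty$; this follows from (PS) together with the fact that, the dimensions of $E_n$ being unbounded inside $H^1_0(\mathrm{int}\,K)$, the admissible symmetric min-max sets attain arbitrarily large genus.
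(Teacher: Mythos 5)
Your overall strategy is the one the paper follows: the even functional $\cJ$ on $H^1_0(\Omega)$ with the equivalent norm coming from the Hardy inequalities, the small-sphere estimate (J1), a flag of finite-dimensional subspaces taken inside $H^1_0(\mathrm{int}\,K)$ so that only the superlinear regime (F3) is felt (this is exactly the paper's condition (J4)), and the symmetric mountain pass theorem. Your worry in the last paragraph is also already resolved by the version of the theorem the paper invokes (Ambrosetti--Rabinowitz, Cor.~2.9): it only requires the nested sequence $\widetilde{E}_1\subset\widetilde{E}_2\subset\cdots$ with $\cJ\le 0$ outside large balls, not decay on \emph{every} finite-dimensional subspace, so no new linking construction is needed and $c_n\to+\infty$ comes for free from that statement.

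There is, however, one genuine gap, in the boundedness of Palais--Smale sequences. Your dichotomy for $v_n=u_n/\|u_n\|\rightharpoonup v$ is: either $v\not\equiv 0$ on $K$ (contradiction with (F3) via Fatou), or $v$ is supported in $\Omega\setminus K$ and is an eigenfunction contradicting (A). But the second branch produces a contradiction only if $v\neq 0$: if $v\equiv 0$ on all of $\Omega$, the limit of $\cJ'(u_n)(\varphi)/\|u_n\|$ is the trivial identity $0=0$ and (A) is never violated. This vanishing case is precisely the hardest one and cannot be dismissed; the paper devotes a separate argument to it, based on the monotonicity condition (F4). Namely, (F4) yields the inequality
\begin{equation*}
\cJ(u)\;\geq\;\cJ(tu)-\frac{t^2-1}{2}\,\cJ'(u)(u)\qquad\text{for all }t>0,
\end{equation*}
and applying it with $t/\|u_n\|$ in place of $t$ gives $\cJ(u_n)\geq \cJ(tv_n)+o(1)$ (here one uses that $\cJ'(u_n)(u_n)\to 0$, i.e.\ the Cerami property rather than plain (PS) — another point you should make explicit, since for a mere (PS) sequence $\cJ'(u_n)(u_n)=o(\|u_n\|)$ is not enough). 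Since $v_n\to 0$ in $L^2\cap L^p$ when $v=0$, the right-hand side tends to $t^2/2$ for every fixed $t$, contradicting the boundedness of $\cJ(u_n)$. Without this step your boundedness claim, and hence the compactness needed for the symmetric mountain pass theorem, is not established.
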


In the last section we also study the normalized problem
\begin{equation}\label{eq:normalized}
\left\{ \begin{array}{ll}
-\Delta u + \lambda u - \mu \frac{u}{|x|^2} - \nu \frac{u}{\dist(x, \R^N\setminus\Omega)^2} = f(x,u) & \quad \mbox{in } \Omega \\
u = 0 & \quad \mbox{on } \partial \Omega, \\
\int_\Omega u^2 \, dx = \rho > 0,
\end{array} \right.
\end{equation}
where $\rho$ is fixed and $(\lambda, u) \in \R \times H^1_0(\Omega)$ is an unknown. Then we obtain the following multiplicity result in the so-called \textit{mass-subcritical case}.

\begin{Th}\label{main:3}
Suppose that (C), (N), (F1) hold with $p < 2_* := 2+\frac{4}{N}$, and $f$ is odd in $u \in \R$. Then there is infinitely many weak solutions to \eqref{eq:normalized}.
\end{Th}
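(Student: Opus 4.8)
The plan is to realize the solutions of \eqref{eq:normalized} as constrained critical points of the energy functional on the $L^2$-sphere, with $\lambda$ playing the role of a Lagrange multiplier, and to extract infinitely many of them from a genus-based minimax scheme. Concretely, set
\[
J(u) = \frac12\int_\Omega |\nabla u|^2\,dx - \frac{\mu}{2}\int_\Omega \frac{u^2}{|x|^2}\,dx - \frac{\nu}{2}\int_\Omega \frac{u^2}{d(x)^2}\,dx - \int_\Omega F(x,u)\,dx
\]
on $H^1_0(\Omega)$, with $d(x)=\dist(x,\R^N\setminus\Omega)$, and restrict it to $\cS_\rho = \{u \in H^1_0(\Omega) : \int_\Omega u^2\,dx = \rho\}$. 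Since $G(u)=\int_\Omega u^2\,dx-\rho$ has nonvanishing derivative $G'(u)=2u$ on $\cS_\rho$, the latter is a $C^1$ Hilbert submanifold of codimension one, and a critical point $u$ of $J|_{\cS_\rho}$ satisfies $J'(u)=-\lambda u$ for some Lagrange multiplier $\lambda\in\R$, which upon unravelling is precisely a weak solution of \eqref{eq:normalized}. By (F1) with $p<2^*$ the map $u\mapsto\int_\Omega F(x,u)\,dx$ is of class $C^1$ and the two singular quadratic forms are continuous by the Hardy inequalities of Section \ref{sect:2}, so $J\in C^1(H^1_0(\Omega),\R)$; moreover $J$ is even since $f$ is odd (hence $F$ even). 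Condition (N) together with those inequalities yields $\theta\in(0,1)$ with
\[
\mu\int_\Omega \frac{u^2}{|x|^2}\,dx + \nu\int_\Omega \frac{u^2}{d(x)^2}\,dx \le \theta \int_\Omega |\nabla u|^2\,dx,
\]
so that $\|u\|^2 := \int_\Omega |\nabla u|^2 - \mu\int_\Omega \tfrac{u^2}{|x|^2} - \nu\int_\Omega \tfrac{u^2}{d(x)^2}$ is an inner-product norm equivalent to the standard $H^1_0$-norm.

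The next step is to show that $J|_{\cS_\rho}$ is bounded from below and coercive, which is where mass-subcriticality enters. From (F1) one has $|F(x,u)|\lesssim |u|+|u|^p$, and the Gagliardo--Nirenberg inequality gives $\|u\|_{L^p}^p\lesssim \|\nabla u\|_{L^2}^{\alpha}\|u\|_{L^2}^{p-\alpha}$ with $\alpha=N(p-2)/2$. The hypothesis $p<2_* = 2+\tfrac4N$ is exactly $\alpha<2$, so on $\cS_\rho$, where $\|u\|_{L^2}^2=\rho$ is fixed and $\|u\|_{L^1}\le|\Omega|^{1/2}\rho^{1/2}$, one obtains
\[
J(u) \ge \frac{1-\theta}{2}\|\nabla u\|_{L^2}^2 - C_1 - C_2\,\rho^{(p-\alpha)/2}\|\nabla u\|_{L^2}^{\alpha}.
\]
Since $\alpha<2$, the subquadratic term is dominated by the quadratic one as $\|\nabla u\|_{L^2}\to\infty$; hence $J$ is coercive on $\cS_\rho$ (with respect to $\|\nabla u\|_{L^2}$, equivalently $\|\cdot\|$) and bounded from below.

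The third step is the Palais--Smale condition for $J|_{\cS_\rho}$. A PS sequence $(u_n)\subset\cS_\rho$ is bounded by coercivity, so a subsequence satisfies $u_n\weakto u$ in $H^1_0(\Omega)$; as $\Omega$ is bounded, $H^1_0(\Omega)\hookrightarrow L^q(\Omega)$ is compact for $q<2^*$, whence $u_n\to u$ in $L^2$ and in $L^p$ and $u\in\cS_\rho$. Writing the constrained condition as $J'(u_n)=-\lambda_n u_n+o(1)$ and testing against $u_n$ shows $(\lambda_n)$ is bounded, while testing against $u_n-u$ makes $\int_\Omega f(x,u_n)(u_n-u)\,dx$ and the multiplier term vanish by the $L^p$- and $L^2$-convergence. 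Regarding $\|\cdot\|$ as the norm of a Hilbert space isomorphic to $H^1_0(\Omega)$, the associated inner product is weakly continuous in its second slot, so $\langle u_n,u_n-u\rangle\to 0$ forces $\|u_n\|\to\|u\|$ and hence $u_n\to u$ strongly in $H^1_0(\Omega)$; the singular quadratic forms then pass to the limit automatically. This yields (PS) on the manifold.

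Finally, with $J|_{\cS_\rho}$ even, of class $C^1$, bounded below and satisfying (PS), I would invoke the Ljusternik--Schnirelmann/Krasnoselskii genus theory on the symmetric manifold $\cS_\rho$. As $\cS_\rho$ contains the unit sphere of every finite-dimensional subspace, the families $\Gamma_k=\{A\subset\cS_\rho : A \text{ compact, } -A=A, \ \gamma(A)\ge k\}$ are nonempty for all $k$, and
\[
c_k = \inf_{A\in\Gamma_k}\ \sup_{u\in A} J(u)
\]
defines a nondecreasing sequence bounded below by $\inf_{\cS_\rho} J$. By the equivariant deformation lemma (available since $J$ is even and satisfies (PS)) each $c_k$ is a critical value, and if $c_k=c_{k+j}$ for some $j\ge 1$ the critical set at that level has genus $\ge j+1$, hence is infinite. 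Thus either the $c_k$ take infinitely many distinct values, or some level carries a critical set of infinite genus; in both cases $J|_{\cS_\rho}$ has infinitely many pairs of critical points, giving the desired infinitely many weak solutions of \eqref{eq:normalized}. I expect the main obstacle to be the verification of (PS) in the presence of the two singular potentials: one must ensure that the Hardy quadratic forms do not destroy compactness at the origin and at $\partial\Omega$, which is handled precisely by treating $\|\cdot\|$ as an equivalent Hilbert norm, so that strong $H^1_0$-convergence—rather than a direct termwise control of the singular integrals—drives the argument.
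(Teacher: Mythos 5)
Your proposal is correct and follows essentially the same route as the paper: constrained critical points on the $L^2$-sphere, coercivity and lower boundedness via Gagliardo--Nirenberg in the mass-subcritical regime, the Palais--Smale condition obtained by bounding the Lagrange multipliers and exploiting the equivalent Hilbert norm, and Krasnoselskii genus/Ljusternik--Schnirelmann theory to produce infinitely many critical points (the paper cites \cite[Theorem II.5.7]{Str} and exhibits compact symmetric subsets of $\cS$ of arbitrary genus via functions with disjoint supports, which matches your observation that the spheres of finite-dimensional subspaces inside $\cS_\rho$ have arbitrarily large genus).
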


In what follows, $\lesssim$ denotes the inequality up to a multiplicative constant. Moreover $C$ denotes a generic constant which may vary from one line to another.

\section{The domain \texorpdfstring{$\Omega$}{Ω} and the singular Schr\"odinger operator}\label{sect:2}

We recall that if $A \subset \R^N$ is a closed, nonempty set, we can define the distance function $\dist (\cdot, A) : \R^N \rightarrow [0,\infty)$ by
$$
\dist(x, A) := \inf_{y \in A} |x-y|, \quad x \in \R^N.
$$
We collect the following properties of the distance function:
\begin{itemize}
    \item[(i)] $|\dist(x, A) - \dist(x',A) \leq |x-x'|$ for all $x,x' \in \R^N$,
    \item[(ii)] if $x \in \R^N \setminus A$, then $\dist(\cdot, A)$ is differentiable at $x$ if and only if there is unique $u \in A$ such that $\dist(x,A) = |x-y|$ and then $\nabla \dist(x, A) = \frac{x-y}{|x-y|}$.
\end{itemize}
Now we consider $A := \R^N \setminus \Omega$. It is clear that $A$ is a closed subset of $\R^N$. We recall that we denote $d(x) = \dist (x, \R^N \setminus \Omega)$. Observe that, due to Rademacher’s theorem (\cite[Theorem 2.2.1]{Ziemer}) and (i), $d$ is differentiable almost everywhere and, from (ii) $|\nabla d| = 1$, almost everywhere. We remind that the assumption (C) says that
$$
-\Delta d \geq 0 \mbox{ in } \Omega
$$
holds in the sense of distributions. We note the following fact.

\begin{Prop}\label{prop:convex}
If $\Omega \subset \R^N$ is a convex domain in $\R^N$, then $d \big|_\Omega$ is concave and satisfies (C).
\end{Prop}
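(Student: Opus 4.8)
The plan is to establish the concavity of $d|_\Omega$ by a direct geometric argument and then to read off condition (C) from concavity by a mollification argument. The key elementary observation to record first is that for every $x \in \Omega$ the open ball $B(x, d(x))$ is contained in $\Omega$: if $|z - x| < d(x) = \dist(x, \R^N \setminus \Omega)$, then $z$ cannot lie in $\R^N \setminus \Omega$, so $z \in \Omega$.

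To prove concavity I would fix $x_0, x_1 \in \Omega$ and $t \in [0,1]$, and set $x_t := (1-t)x_0 + t x_1$ together with $r_t := (1-t)d(x_0) + t\, d(x_1)$. The heart of the argument is the identity that a dilation and a Minkowski sum of balls is again a ball, together with the convexity of $\Omega$ in the form $(1-t)\Omega + t\Omega \subseteq \Omega$. Combining these with the previous observation gives the chain of inclusions
$$
B(x_t, r_t) = (1-t)B(x_0, d(x_0)) + t B(x_1, d(x_1)) \subseteq (1-t)\Omega + t\Omega \subseteq \Omega,
$$
and since $B(x_t, r_t) \subseteq \Omega$ forces $d(x_t) \geq r_t$, we obtain $d(x_t) \geq (1-t)d(x_0) + t\, d(x_1)$, which is exactly concavity of $d$ on $\Omega$.

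It remains to pass from concavity to the distributional inequality $-\Delta d \geq 0$. Here I would mollify: let $d_\eps := d * \eta_\eps$ with a standard nonnegative mollifier $\eta_\eps$, so that $d_\eps$ is smooth and concave on $\Omega_\eps := \{ x \in \Omega : \dist(x, \pa\Omega) > \eps \}$; being smooth and concave, its Hessian is negative semidefinite there, hence $-\Delta d_\eps \geq 0$ pointwise on $\Omega_\eps$. For any test function $\vp \in C_c^\infty(\Omega)$ with $\vp \geq 0$ and $\eps$ small enough that $\supp \vp \subset \Omega_\eps$, integration by parts yields
$$
\int_\Omega d_\eps (-\Delta \vp) \, dx = \int_\Omega (-\Delta d_\eps)\, \vp \, dx \geq 0,
$$
and letting $\eps \to 0$, using that $d_\eps \to d$ locally uniformly (as $d$ is continuous), gives $\int_\Omega d(-\Delta\vp)\,dx \geq 0$ for every nonnegative $\vp$, that is, $-\Delta d \geq 0$ in the sense of distributions, which is precisely (C).

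The concavity step is the main obstacle: one must correctly combine the Minkowski-sum identity $ (1-t)B(x_0,d(x_0)) + tB(x_1,d(x_1)) = B(x_t, r_t)$ with the inclusion $(1-t)\Omega + t\Omega \subseteq \Omega$ coming from convexity. The mollification step is routine, but I would take care to note that concavity is preserved under convolution with a nonnegative kernel and that the relevant supports lie in $\Omega_\eps$ for $\eps$ small.
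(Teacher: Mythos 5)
Your proof is correct, but both halves proceed differently from the paper's. For concavity, the paper fixes the nearest boundary point $z_0$ to the intermediate point $z=\alpha x+(1-\alpha)y$, takes the supporting hyperplane $T_{z_0}$ orthogonal to $z-z_0$, and compares $d(x), d(y)$ with the distances $|x-x_0|, |y-y_0|$ to that hyperplane; your route instead uses the inclusion $B(x,d(x))\subseteq\Omega$, the Minkowski-sum identity for balls, and $(1-t)\Omega+t\Omega\subseteq\Omega$. Both are standard; yours avoids any discussion of supporting hyperplanes and of why $|x-x_0|\geq d(x)$, which the paper leaves implicit, at the cost of having to justify the Minkowski identity (and to note that the endpoint cases $t\in\{0,1\}$ are trivial, since scaling a ball by $0$ degenerates). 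For the passage from concavity to (C), the paper simply cites the theorem of Evans--Gariepy that a concave function has a distributional Laplacian equal to a nonpositive Radon measure, whereas you mollify, use that a smooth concave function has nonpositive Laplacian, and pass to the limit via local uniform convergence; your argument is self-contained and equally valid (note only that $\Omega_\eps=\{d>\eps\}$ is convex because it is a superlevel set of a concave function, so concavity of $d_\eps$ there makes sense, and that your conclusion $\int_\Omega d\,(-\Delta\vp)\,dx\geq 0$ is equivalent, by one integration by parts using that $d$ is Lipschitz, to the form $\int_\Omega \nabla d\cdot\nabla\vp\,dx\geq 0$ used in the paper).
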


\begin{proof}
First note that $d \big|_\Omega : \Omega \rightarrow [0,\infty)$ is a concave function. Indeed, fix any $x,y \in \Omega$ and $\alpha \in [0,1]$. Let $z = \alpha x + (1-\alpha)y$. Choose $z_0 \in \partial \Omega$ such that $d(z) = |z-z_0|$. Let $T_{z_0} := z_0 + \mathrm{span} \{ z-z_0 \}^\perp$ be an affine subspace of $\R^N$  orthogonal to $z-z_0$ containing $z_0$. Define $x_0, y_0$ as orthogonal projections of $x, y$ respectively onto $T_{z_0}$. Then
$$
d(z) = |z-z_0| = \alpha |x-x_0| + (1-\alpha)|y-y_0| \geq \alpha d(x) + (1-\alpha) d(y),
$$
which completes the proof of concavity. Moreover, since $d$ is concave on $\Omega$, from \cite[Theorem 6.8]{Ev} there is a nonnegative Radon measure $\mu$ on $\Omega$ satisfying
$$
-\Delta d = \mu \quad \mbox{in the sense of distributions, }
$$
namely
$$
\int_{\Omega} \nabla d \cdot \nabla \varphi \, dx = \int_\Omega \varphi \, d\mu \quad \mbox{for } \varphi \in \cC_0^\infty (\Omega).
$$
Clearly, for $\varphi \geq 0$ we get
$$
\int_{\Omega} \nabla d \cdot \nabla \varphi \, dx \geq 0
$$
and condition (C) holds.
\end{proof}
To study singular terms in \eqref{eq} we recall the following Hardy-type inequalities. If $u \in H^1_0 (\Omega)$, where $\Omega$ is a domain in $\R^N$ with finite Lebesgue measure and $0 \in \Omega$, then (see \cite{BV})
\begin{equation}\label{hardy1}
    \frac{(N-2)^2}{4} \int_\Omega \frac{u^2}{|x|^2} \, dx \leq \int_\Omega |\nabla u|^2 \, dx.
\end{equation}
Now let $\Omega \subset \R^N$ be a bounded domain satisfying (C). Then, for $u \in H^1_0(\Omega)$, the following Hardy inequality involving the distance function holds (see \cite{BFT})
\begin{equation}\label{hardy2}
\frac14 \int_\Omega \frac{u^2}{d(x)^2} \, dx \leq \int_{\Omega} |\nabla u|^2 \, dx.
\end{equation}
We consider the operator $\mathcal{A} := -\Delta - \frac{\mu}{|x|^2} - \frac{\nu}{d(x)^2} - \Theta(x)$ on $L^2 (\Omega \setminus K)$ with Dirichlet boundary conditions. Then the domain is $\cD(\cA) := H^2 (\Omega \setminus K) \cap H^1_0 (\Omega \setminus K)$.

\begin{Th}\label{th:spectr}
The operator $\cA : \cD(A) \subset L^2 (\Omega \setminus K) \rightarrow L^2 (\Omega \setminus K)$ is elliptic, self-adjoint on $L^2 (\Omega \setminus K)$ and has compact resolvents. Moreover the spectrum $\sigma(\cA) \subset (-|\Theta|_\infty, +\infty)$ and consists of eigenvalues $-|\Theta|_\infty < \lambda_1 \leq \lambda_2 \leq \ldots$ with $\lambda_n \to +\infty$ as $n \to +\infty$.
\end{Th}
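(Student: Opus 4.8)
The plan is to realise $\cA$ as the self-adjoint operator associated, via the first representation theorem for closed semibounded symmetric forms, to the quadratic form
$$
\mathfrak{a}(u,v) = \int_{\Omega\setminus K} \nabla u\cdot\nabla v \, dx - \int_{\Omega\setminus K}\Big(\frac{\mu}{|x|^2}+\frac{\nu}{d(x)^2}+\Theta(x)\Big)uv\,dx
$$
with form domain $H^1_0(\Omega\setminus K)$, which is dense in $L^2(\Omega\setminus K)$. The first task is to check that $\mathfrak{a}$ is symmetric, bounded below and closed. Symmetry is immediate. For semiboundedness, extend $u\in H^1_0(\Omega\setminus K)$ by zero to an element of $H^1_0(\Omega)$ (the two integrals are unchanged, since $u$ vanishes on $K$) and apply the Hardy inequalities \eqref{hardy1} and \eqref{hardy2}. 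Writing $\theta := \frac{\mu}{(N-2)^2}+\nu$, they give
$$
\int_{\Omega\setminus K}\Big(\frac{\mu}{|x|^2}+\frac{\nu}{d(x)^2}\Big)u^2\,dx \le 4\theta\int_{\Omega\setminus K}|\nabla u|^2\,dx,
$$
and by (N) we have $4\theta<1$. Since $\Theta\in L^\infty$ by (F5), this yields
$$
\mathfrak{a}(u,u)\ge (1-4\theta)\|\nabla u\|_{L^2}^2 - |\Theta|_\infty\|u\|_{L^2}^2,
$$
so $\mathfrak{a}$ is bounded below by $-|\Theta|_\infty$. Moreover the shifted form $\mathfrak{a}(u,u)+(|\Theta|_\infty+1)\|u\|_{L^2}^2$ is comparable to $\|u\|_{H^1_0}^2$, so the form norm is equivalent to the $H^1_0$ norm; closedness of $\mathfrak{a}$ then reduces to completeness of $H^1_0(\Omega\setminus K)$.

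By the representation theorem, $\mathfrak{a}$ defines a unique self-adjoint operator bounded below by $-|\Theta|_\infty$, whose form domain is $H^1_0(\Omega\setminus K)$. To identify this operator with $\cA$ on $\cD(\cA)=H^2(\Omega\setminus K)\cap H^1_0(\Omega\setminus K)$, I would invoke interior and boundary elliptic regularity, treating the potential $\frac{\mu}{|x|^2}+\frac{\nu}{d^2}+\Theta$ as a lower-order term controlled by the Hardy bounds. Ellipticity itself is clear, since the principal part is $-\Delta$, whose principal symbol $|\xi|^2$ is nondegenerate for $\xi\ne0$.

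For the spectrum, observe that the form domain $H^1_0(\Omega\setminus K)$ embeds compactly into $L^2(\Omega\setminus K)$ by the Rellich--Kondrachov theorem, as $\Omega\setminus K$ is bounded. For $s>|\Theta|_\infty$ the shifted form is coercive on $H^1_0(\Omega\setminus K)$, so $(\cA+s)^{-1}$ maps $L^2$ boundedly into $H^1_0$, hence compactly into $L^2$; thus $\cA$ has compact resolvent. A self-adjoint, semibounded operator with compact resolvent has purely discrete spectrum consisting of eigenvalues of finite multiplicity accumulating only at $+\infty$, giving $\lambda_1\le\lambda_2\le\cdots\to+\infty$. Finally, combining the Hardy lower bound with the Poincar\'e inequality $\|\nabla u\|_{L^2}^2\ge C_P\|u\|_{L^2}^2$ on the bounded set $\Omega\setminus K$ (with $C_P>0$) yields, via the variational characterisation, $\lambda_1=\inf_{u\ne0}\mathfrak{a}(u,u)/\|u\|_{L^2}^2\ge (1-4\theta)C_P-|\Theta|_\infty>-|\Theta|_\infty$, so $\sigma(\cA)\subset(-|\Theta|_\infty,+\infty)$.

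I expect the main obstacle to be the precise identification of the operator domain as $H^2\cap H^1_0$: near the origin and near $\partial\Omega$ the potential is genuinely singular, so one must verify that for $u$ in the operator domain the terms $\frac{u}{|x|^2}$ and $\frac{u}{d^2}$ indeed lie in $L^2$, and handle regularity across the interface $\partial K$. By contrast, semiboundedness, self-adjointness in the form sense, the compactness of the resolvent and the discreteness of the spectrum follow robustly from the Hardy inequalities together with (N) and standard abstract spectral theory.
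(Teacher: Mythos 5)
Your proposal is correct, and its analytic core coincides with the paper's: the Hardy inequalities \eqref{hardy1} and \eqref{hardy2} together with (N) make the quadratic form semibounded while leaving the gradient term with the positive coefficient $1-\tfrac{4\mu}{(N-2)^2}-4\nu$, and Rellich--Kondrachov gives compactness of the resolvent and hence discreteness of the spectrum. The packaging differs in two places. First, you build the self-adjoint realization through the closed semibounded form and the representation theorem, and you honestly flag that identifying the operator domain with $H^2(\Omega\setminus K)\cap H^1_0(\Omega\setminus K)$ is delicate, since the inverse-square singularities at the origin and along $\partial\Omega$ are not lower-order perturbations amenable to standard elliptic regularity; the paper simply asserts self-adjointness on that domain (``it is easy to check'') without addressing this, so your treatment is, if anything, the more careful one, and the form-theoretic route is what actually closes the argument. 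Second, for the strict inclusion $\sigma(\cA)\subset(-|\Theta|_\infty,+\infty)$ the paper argues by contradiction: if $-|\Theta|_\infty$ were an eigenvalue with eigenfunction $u$, extended by zero to $H^1_0(\Omega)$, then $\int_{\Omega\setminus K}|\nabla u|^2-\mu\tfrac{u^2}{|x|^2}-\nu\tfrac{u^2}{d(x)^2}\,dx=\int_{\Omega\setminus K}(\Theta-|\Theta|_\infty)u^2\,dx\le 0$, while Hardy bounds the left-hand side below by $\bigl(1-\tfrac{4\mu}{(N-2)^2}-4\nu\bigr)\int_{\Omega\setminus K}|\nabla u|^2\,dx$, forcing $u=0$. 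Your variational argument $\lambda_1\ge(1-4\theta)C_P-|\Theta|_\infty$ via the Poincar\'e inequality reaches the same conclusion more directly and buys a quantitative spectral gap above $-|\Theta|_\infty$, which the paper's contradiction argument does not provide. Both are valid.
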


\begin{proof}
It is well-known that $\cD(\cA)$ is closed in $L^2(\Omega \setminus K)$. It is easy to check that $\cA$ is self-adjoint. The compactness of its resolvents easily follows from the Rellich-Kondrachov theorem. Hence its spectrum consists of eigenvalues $\lambda_n$ with $\lambda_n \to +\infty$. To see that $\sigma(\cA) \subset (-|\Theta|_\infty, +\infty)$, suppose that $\lambda$ is an eigenvalue of $\cA$ with an associated eigenfunction $u \in H^1_0(\Omega \setminus K)$. We can treat $u$ as a function in $H^1_0(\Omega)$ being zero on $K$. Then, using \eqref{hardy1} and \eqref{hardy2},
\begin{align*}
\lambda \int_{\Omega \setminus K} u^2 \, dx &\geq - \int_\Omega |\nabla u|^2 + \mu \frac{u^2}{|x|^2} + \nu \frac{u^2}{d(x)^2} \, dx + \lambda \int_{\Omega \setminus K} u^2 \, dx \\ &= -\int_{\Omega \setminus K} \Theta(x) u^2 \, dx \geq - |\Theta|_\infty \int_{\Omega \setminus K} u^2 \, dx.
\end{align*}
Hence $\lambda \geq -|\Theta|_\infty$. To show that $\lambda \neq -|\Theta|_\infty$, suppose by contradiction that there is $u \in H^1_0 (\Omega \setminus K)$ with
$$
\int_{\Omega \setminus K} |\nabla u|^2 - \mu \frac{u^2}{|x|^2} - \nu \frac{u^2}{d(x)^2} - \Theta(x) u^2 \, dx = -|\Theta|_\infty \int_{\Omega \setminus K} u^2 \, dx.
$$
Thus
$$
\int_{\Omega \setminus K} |\nabla u|^2 - \mu \frac{u^2}{|x|^2} - \nu \frac{u^2}{d(x)^2} \, dx = \int_{\Omega \setminus K} (\Theta(x) - |\Theta|_\infty) u^2 \, dx \leq 0.
$$
However, using \eqref{hardy1} and \eqref{hardy2} we get
$$
\int_{\Omega \setminus K} |\nabla u|^2 - \mu \frac{u^2}{|x|^2} - \nu \frac{u^2}{d(x)^2} \, dx \geq \left(1 - \frac{4\mu}{(N-2)^2} - 4 \nu \right) \int_{\Omega \setminus K} |\nabla u|^2 \, dx.
$$
Hence $\int_{\Omega \setminus K} |\nabla u|^2 \, dx = 0$ and $u = 0$, which is a contradiction. Hence $\sigma(\cA) \subset (-|\Theta|_\infty, +\infty)$.
\end{proof}

\section{Variational setting and critical point theory}

Suppose that $(E, \| \cdot \|)$ is a Hilbert space and $\cJ : E \rightarrow \R$ is a nonlinear functional of the general form
$$
\cJ(u) = \frac12 \|u\|^2 - \cI(u),
$$
where $\cI$ is of $\cC^1$ class and $\cI(0)=0$. We introduce the so-called \textit{Nehari manifold}
$$
\cN := \{ u \in E \setminus \{ 0 \} \ : \ \cJ'(u)(u) = 0 \}.
$$
Observe that $\cI'(u)(u) > 0$ on $\cN$. Indeed,
$$
0 = \cJ'(u)(u) = \|u\|^2 - \cI'(u)(u), \quad u \in \cN.
$$
To utilize the mountain pass approach, we consider the following space of paths
$$
\Gamma := \{ \gamma \in \cC ([0,1], E) \ : \ \gamma(0) = 0, \ \|\gamma(1)\| > r, \ \cJ(\gamma(1)) < 0 \}
$$
and the following mountain pass level
$$
c := \inf_{\gamma \in \Gamma} \sup_{t \in [0,1]} \cJ(\gamma(t)).
$$
Moreover we set
$$
\Gamma_Q := \Gamma \cap \cC([0,1],Q).
$$
We propose an abstract theorem which is a combination of \cite[Theorem 5.1]{B} and \cite[Theorem 2.1]{BM-Indiana}. The proof is a straightforward modification of proofs of mentioned theorems, however we include it here for the reader's convenience.

\begin{Th}\label{abstract}
Suppose that
\begin{itemize}
\item[(J1)] there is $r > 0$ such that
$$
a := \inf_{\|u\|=r} \cJ(u) > 0;
$$
\item[(J2)] there is a closed vector subspace $Q \subset E$ such that $\frac{\cI (t_n u_n)}{t_n^2} \to +\infty$ for $t_n \to +\infty$, $u_n \in Q$ and $u_n \to u \neq 0$;
\item[(J3)] for all $t > 0$ and $u \in \cN$ there holds
$$
\frac{t^2-1}{2} \cI'(u)(u) - \cI(tu) + \cI(u) \leq 0.
$$
\end{itemize}
Then $\Ga_Q \neq \emptyset$, $\cN \cap Q \neq \emptyset$ and
\begin{equation}\label{gsl}
0 < \inf_{\|u\|=r} \cJ(u) \leq c \leq  \inf_{\gamma \in \Ga_Q} \sup_{t \in [0,1]} \cJ(\gamma(t)) = \inf_{\cN \cap Q} \cJ = \inf_{u \in Q \setminus \{0\}} \sup_{t \geq 0} \cJ(tu),
\end{equation}
Moreover there is a Cerami sequence for $\cJ$ on the level $c$, i.e. a sequence $\{ u_n \}_n \subset E$ such that
$$
\cJ(u_n) \to c, \quad (1+\|u_n\|) \cJ'(u_n) \to 0.
$$
\end{Th}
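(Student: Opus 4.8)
The plan is to prove the chain of (in)equalities in \eqref{gsl} and then construct the Cerami sequence via the mountain pass theorem. The overall structure follows the standard Nehari-manifold/mountain-pass interplay, but the monotonicity hypothesis (J3) replaces the usual $C^2$ or Ambrosetti--Rabinowitz conditions, so I must be careful to use only (J1)--(J3).

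First I would establish the fibering picture. Fix $u \in Q \setminus \{0\}$ and consider $g_u(t) := \cJ(tu) = \tfrac{t^2}{2}\|u\|^2 - \cI(tu)$ for $t \geq 0$. By (J1), since $\cI(u) = o(\|u\|^2)$ as $\|u\| \to 0$ (which follows because $a > 0$ forces $\cJ(u) > 0$ near $0$), we get $g_u(t) > 0$ for small $t > 0$. By (J2), $\tfrac{\cI(tu)}{t^2} \to +\infty$ as $t \to +\infty$, so $g_u(t) \to -\infty$; hence $g_u$ attains a positive maximum at some $t_u > 0$, giving $g_u'(t_u) = 0$, i.e. $t_u u \in \cN$. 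This shows $\cN \cap Q \neq \emptyset$ and, taking the path $\gamma(t) = t T u$ for $T$ large, that $\Gamma_Q \neq \emptyset$.

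The heart of the argument is to show that the maximum of $g_u$ over $t \geq 0$ is \emph{unique} and equals $\sup_{t \geq 0}\cJ(tu)$, which I would extract directly from (J3). Rewriting (J3) with $u$ replaced by the Nehari point: for $w \in \cN$ and any $s > 0$, the inequality $\tfrac{s^2-1}{2}\cI'(w)(w) - \cI(sw) + \cI(w) \leq 0$, combined with $\cI'(w)(w) = \|w\|^2$ (the Nehari identity), yields $\cJ(sw) \leq \cJ(w)$ for all $s > 0$. Thus every Nehari point is a global maximizer along its own ray, giving $\sup_{t \geq 0}\cJ(tu) = \cJ(t_u u)$ and, via a routine argument, uniqueness of $t_u$. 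Consequently
$$
\inf_{u \in Q \setminus\{0\}} \sup_{t \geq 0}\cJ(tu) = \inf_{\cN \cap Q}\cJ.
$$
The identification with $\inf_{\gamma \in \Gamma_Q}\sup_t \cJ(\gamma(t))$ then follows because every path in $\Gamma_Q$ starting at $0$ and ending below the Nehari energy level must cross $\cN \cap Q$ (an intermediate-value argument on $t \mapsto \cJ'(\gamma(t))(\gamma(t))$ together with the sign analysis above), giving $\inf_{\Gamma_Q}\sup \geq \inf_{\cN \cap Q}\cJ$; the reverse inequality uses the explicit ray paths. The bound $c \leq \inf_{\Gamma_Q}\sup$ is immediate since $\Gamma_Q \subset \Gamma$, and $a \leq c$ is the mountain-pass geometry from (J1).

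Finally, the existence of a Cerami sequence at level $c$ is the quantitative deformation / mountain-pass theorem: (J1) supplies the geometry ($\cJ(0)=0$, a positive ``ring'' at $\|u\|=r$, and points of negative energy beyond $r$ coming from $\Gamma \neq \emptyset$), so the Cerami version of the mountain pass lemma applies directly. \textbf{The main obstacle} I anticipate is the uniqueness and sign analysis of $t_u$ purely from (J3): without strict convexity one must argue carefully that $g_u'(t)$ changes sign exactly once, deducing from (J3) that $\cI'(w)(w)$ grows fast enough along the ray, and then verifying the crossing property for arbitrary continuous paths in $\Gamma_Q$ rather than merely for rays. This is where the combinatorial care in \cite{B,BM-Indiana} is needed, and I would devote the bulk of the write-up to it.
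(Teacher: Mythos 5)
Your overall route is the same as the paper's: use (J2) to get $\cJ(tu)\to-\infty$ along rays in $Q$ (hence $\Gamma_Q\neq\emptyset$ and a maximizer on each ray, hence $\cN\cap Q\neq\emptyset$), use (J3) at a Nehari point to show it is a global maximizer of its ray, deduce the chain of identities via the crossing property of paths in $\Gamma_Q$, and invoke the mountain pass theorem for the Cerami sequence. One step you announce, however, would fail: the \emph{uniqueness} of $t_u$ cannot be extracted from (J3). The inequality in (J3) is non-strict, and all it yields at a point $w\in\cN$ is $\cJ(sw)\leq\cJ(w)$ for every $s>0$; this is perfectly compatible with $\cJ(sw)$ being constant on a whole interval of $s$, so the set of maximizers along a ray may be a nondegenerate interval, every point of which lies in $\cN$. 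The paper explicitly works with this: it introduces $0<t_{\min}(u)\leq 1\leq t_{\max}(u)$ such that $tu\in\cN\cap Q$ for all $t\in[t_{\min}(u),t_{\max}(u)]$ and $\cJ(tu)$ is constant there, with $\cJ'(tu)(u)>0$ below and $<0$ above this interval. That sign information (not uniqueness) is what makes $Q\setminus\cN$ split into two components and forces every path in $\Gamma_Q$ to meet $\cN\cap Q$, and the constancy of $\cJ$ on the interval is what makes $\inf_{\cN\cap Q}\cJ=\inf_{u\in Q\setminus\{0\}}\sup_{t>0}\cJ(tu)$ well defined. Since you say you would devote the bulk of the write-up to proving uniqueness, you would be chasing a false statement; replace that effort by the interval-of-maximizers bookkeeping above and the rest of your argument goes through unchanged.
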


\begin{proof}
Observe that there exists $v \in Q \setminus \{0\}$ with $\|v\| > r$ such that $\cJ(v) < 0$. Indeed, fix $u \in Q \setminus \{0\}$ and from (J2) there follows that
\begin{equation}\label{infty}
\frac{\cJ(tu)}{t^2} = \frac12 \|u\|^2 - \frac{\cI(tu)}{t^2} \to - \infty \quad \mbox{as } t \to +\infty
\end{equation}
and we may take $v := t u$ for sufficiently large $t > 0$. In particular, the family of paths $\Gamma_Q$ is nonempty. Moreover, $\cJ(tu) \to 0$ as $t \to 0^+$ and for $t = \frac{r}{\|u\|} > 0$ we get $\cJ(tu) > 0$. Hence, taking \eqref{infty} into account, $(0,+\infty) \ni t \mapsto \cJ(tu) \in \R$ has a local maximum, which is a critical point of $\cJ(tu)$ and $tu \in \cN$. Hence $\cN \cap Q \neq \emptyset$. Suppose that $u \in \cN \cap Q$. Then, from (J3),
\begin{equation*}
\cJ(tu) = \cJ(tu) - \frac{t^2-1}{2} \cJ'(u)(u) \leq \cJ(u)
\end{equation*}
and therefore $u$ is a maximizer (not necessarily unique) of $\cJ$ on $\R_+ u := \{ su \ : \ s > 0 \}$. Hence, for any $u \in \cN\cap Q$ there are $0 < t_{\min} (u) \leq 1 \leq t_{\max}(u)$ such that $t u \in \cN\cap Q$ for any $t \in [t_{\min}(u), t_{\max} (u)]$ and 
$$
[t_{\min}(u), t_{\max} (u)] \ni t \mapsto \cJ(tu) \in \R
$$
is constant. Moreover $\cJ'(tu)(u) > 0$ for $t \in (0, t_{\min}(u))$ and $\cJ'(tu)(u) < 0$ for $t \in (t_{\max} (u), +\infty)$, $Q \setminus \cN$ consists of two connected components and any path $\gamma \in \Gamma_Q$ intersects $\cN\cap Q$. Thus
$$
\inf_{\gamma \in \Gamma_Q} \sup_{t \in [0,1]} \cJ(\gamma(t)) \geq \inf_{\cN\cap Q} \cJ.
$$
Since
$$
\inf_{\cN\cap Q} \cJ = \inf_{u \in Q \setminus \{0\}} \sup_{t > 0} \cJ(tu)
$$
there follows, under (J1), that
$$
c = \inf_{\gamma \in \Gamma} \sup_{t \in [0,1]} \cJ(\gamma(t)) \leq \inf_{\gamma \in \Gamma_Q} \sup_{t \in [0,1]} \cJ(\gamma(t)) = \inf_{\cN \cap Q} \cJ = \inf_{u \in Q \setminus \{0\}} \sup_{t > 0} \cJ(tu).
$$
The existence of a Cerami sequence follows from the mountain pass theorem.
\end{proof}

To study the multiplicity of solutions we will recall the symmetric mountain pass theorem. We consider the following condition
\begin{enumerate}
    \item[(J4)] there exists a sequence of subspaces $\widetilde{E}_1 \subset \widetilde{E_2} \subset \ldots \subset E$ such that $\dim \widetilde{E}_k=k$ for every $k \geq 1$ and there is a radius $R_k$ such that $\sup_{u \in \widetilde{E}_k,\ \|u\|\geq R_k} \cJ \leq 0$.
\end{enumerate}
Then, the following theorem holds.

\begin{Th}[{\cite[Corolarry 2.9]{AmbrRab}}, {\cite[Theorem 9.12]{Rabinowitz}}]\label{multipl}
Suppose that $\cJ$, as above, is even and satisfies (J1), (J4) and a Palais-Smale condition (namely, any Palais-Smale sequence for $\cJ$ contains a convergent subsequence). Then $\cJ$ has an unbounded sequence of critical values.
\end{Th}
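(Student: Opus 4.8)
The statement is the symmetric ($\Z_2$-equivariant) mountain pass theorem, and I would prove it with two classical ingredients: the Krasnoselskii genus and an equivariant deformation lemma. First I would recall the genus $\gamma(A)\in\N\cup\{\infty\}$ defined for closed symmetric sets $A=-A$ with $0\notin A$, together with the properties I will use: monotonicity and subadditivity ($\gamma(A\cup B)\le\gamma(A)+\gamma(B)$), nonincrease under odd continuous maps, the fact that a symmetric sphere of an $m$-dimensional subspace has genus $m$, and that a compact symmetric set with $0\notin A$ has finite genus and admits a symmetric neighborhood of the same genus. Next I would record the deformation lemma: since $\cJ\in\cC^1$ is even and satisfies the stated Palais--Smale condition, for every $c$ and every symmetric neighborhood $\cU$ of the critical set $K_c:=\{u:\cJ(u)=c,\ \cJ'(u)=0\}$ there are $\eps>0$ and an odd homeomorphism $\eta$ of $E$ with $\eta(\{\cJ\le c+\eps\}\setminus\cU)\subset\{\cJ\le c-\eps\}$; the oddness of $\eta$ is obtained by symmetrizing a pseudo-gradient vector field, and $K_c$ is compact by (PS).

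To encode the geometry (J1)--(J4) I would work with Benci's pseudo-index relative to the sphere $S_r:=\{\|u\|=r\}$ furnished by (J1). For a compact symmetric $A$ set $i^*(A):=\min_h\gamma(h(A)\cap S_r)$, the minimum over the admissible odd homeomorphisms, and define $c_k:=\inf\{\sup_A\cJ:\ A\ \text{compact},\ A=-A,\ i^*(A)\ge k\}$. Two elementary observations set the scene. Because $i^*(A)\ge1$ forces $A\cap S_r\neq\emptyset$, every admissible set meets $S_r$ and hence $c_k\ge\inf_{S_r}\cJ=a>0$ by (J1); and the family is nested in $k$, so $a\le c_1\le c_2\le\cdots$. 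For finiteness I would use the caps $D_m:=\widetilde E_m\cap\overline{B_{R_m}}$ from (J4): these are compact symmetric sets on whose outer sphere $\cJ\le0$, and with the standard normalization they carry pseudo-index $\ge m$, so for each $k$ there is an admissible set with finite $\cJ$-supremum and $c_k<\infty$.

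Each $c_k$ is then a critical value: were it regular, the deformation lemma (with $\cU=\emptyset$) would push an almost-optimal admissible $A$ (with $\sup_A\cJ<c_k+\eps$) below $c_k-\eps$, and since $\eta$ is an odd homeomorphism it preserves the pseudo-index, $i^*(\eta(A))\ge k$, contradicting the definition of $c_k$. The same circle of ideas yields multiplicity at coincident levels: if $c_k=\cdots=c_{k+p}=:c$, then choosing a symmetric neighborhood $\cU$ of $K_c$ with $\gamma(\cU)=\gamma(K_c)$ and using $i^*(\overline{A\setminus\cU})\ge i^*(A)-\gamma(\cU)$ together with the deformation forces $\gamma(K_c)\ge p+1$, so that level carries infinitely many critical points.

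The heart of the argument is to show $c_k\to+\infty$, which I would prove by contradiction. If the nondecreasing sequence $c_k$ stayed bounded it would converge to some $\bar c<\infty$; since $\bar c\ge a>0=\cJ(0)$, the critical set $K_{\bar c}$ is compact (by (PS)) and avoids $0$, hence $n:=\gamma(K_{\bar c})<\infty$. Taking a symmetric neighborhood $\cU$ with $\gamma(\cU)=n$ and the associated odd $\eta$ lowering $\{\cJ\le\bar c+\eps\}\setminus\cU$ into $\{\cJ\le\bar c-\eps\}$, I would fix $k$ with $c_k>\bar c-\eps$ and pick an admissible $A$ with $i^*(A)\ge k+n$ and $\sup_A\cJ<\bar c+\eps$ (possible as $c_{k+n}\le\bar c$). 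Then $i^*(\overline{A\setminus\cU})\ge k$, and $\eta(\overline{A\setminus\cU})$ is an admissible set of index $\ge k$ inside $\{\cJ\le\bar c-\eps\}$, giving $c_k\le\bar c-\eps$, a contradiction. Hence $\{c_k\}$ is unbounded and provides the desired unbounded sequence of critical values. The main obstacle, where I would spend most care, is the bookkeeping of the pseudo-index: normalizing the admissible class of homeomorphisms so that simultaneously the intersection property ($i^*\ge1\Rightarrow A\cap S_r\neq\emptyset$), the finiteness via caps ($i^*\ge m$ on the $D_m$-type sets), and the subadditivity $i^*(\overline{A\setminus\cU})\ge i^*(A)-\gamma(\cU)$ all hold, together with establishing the odd deformation lemma under (PS).
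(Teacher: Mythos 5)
The paper offers no proof of this theorem at all: it is stated as a classical result and attributed to \cite[Corollary 2.9]{AmbrRab} and \cite[Theorem 9.12]{Rabinowitz}, so there is nothing internal to compare your argument against. What you have written is a correct outline of the standard proof of the symmetric mountain pass theorem, in the Benci pseudo-index formulation rather than Rabinowitz's original minimax classes $\Gamma_j$ built from odd maps of the caps $\widetilde E_m\cap \overline{B_{R_m}}$ modulo sets of small genus; the two are equivalent repackagings of the same genus-plus-odd-deformation machinery, and your version makes the monotonicity, finiteness, and divergence of the levels $c_k$ particularly transparent. Two points deserve flagging. First, the one genuinely nontrivial ingredient you invoke without proof is the intersection lemma asserting that the caps from (J4) have pseudo-index at least $m$, i.e.\ $\gamma\bigl(h(\widetilde E_m\cap\overline{B_{R_m}})\cap S_r\bigr)\ge m$ for every admissible odd $h$; this is where Borsuk--Ulam enters and where the normalization $h=\mathrm{id}$ on $\{\cJ\le 0\}$ (which is consistent with (J4), since $\cJ\le 0$ on the outer spheres) is essential --- you correctly identify this as the delicate bookkeeping but do not carry it out, and it is the step a referee would ask you to write down (it is \cite[Proposition 9.18]{Rabinowitz}). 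Second, your contradiction argument for $c_k\to+\infty$ implicitly uses that the admissible class of homeomorphisms is closed under composition with the deformation $\eta$, so that $i^*(\eta(\overline{A\setminus\cU}))\ge i^*(\overline{A\setminus\cU})$; this is true for the standard normalization but should be stated. With those caveats the proposal is sound and is essentially the textbook proof the paper is citing.
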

We work in the usual Sobolev space $H^1_0(\Omega)$ being the completion of $\cC_0^\infty(\Omega)$ with respect to the norm
$$
\| u \|_{H^1} := \left( \int_{\Omega} |\nabla u|^2 + u^2 \, dx \right)^{1/2}.
$$
Define the bilinear form $B : H^1_0(\Omega) \times H^1_0 (\Omega) \rightarrow \R$ by
$$
B(u,v) := \int_{\Omega} \nabla u \cdot \nabla v + \lambda uv \, dx - \mu \int_{\Omega} \frac{uv}{|x|^2} \, dx - \nu \int_\Omega \frac{uv}{d(x)^2} \, dx, \quad u,v \in H^1_0(\Omega).
$$

\begin{Lem}
$B$ defines an inner product on $H^1_0 (\Omega)$. Moreover, the associated norm is equivalent with the usual one.
\end{Lem}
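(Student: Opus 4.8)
The plan is to check the inner-product axioms and then prove the quantitative two-sided estimate
$$
c_1 \|u\|_{H^1}^2 \leq B(u,u) \leq c_2 \|u\|_{H^1}^2, \qquad u \in H^1_0(\Omega),
$$
for some constants $c_1,c_2>0$; the right inequality yields continuity of $B$, while the left one yields both positive definiteness and equivalence of the induced norm with $\|\cdot\|_{H^1}$. Symmetry $B(u,v)=B(v,u)$ and bilinearity are immediate from the definition, so the entire content of the lemma is this estimate. Throughout I use the standing regime $\lambda\ge 0$ together with condition (N).

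First I would record that $B$ is finite on $H^1_0(\Omega)$: by the Hardy inequalities \eqref{hardy1} and \eqref{hardy2} the integrals $\int_\Omega u^2/|x|^2\,dx$ and $\int_\Omega u^2/d(x)^2\,dx$ are controlled by $\int_\Omega|\nabla u|^2\,dx$, hence finite, and Cauchy--Schwarz then bounds the mixed terms $\int_\Omega uv/|x|^2\,dx$ and $\int_\Omega uv/d(x)^2\,dx$. The upper bound is then easy: since $\mu,\nu\ge 0$ the two singular terms in $B(u,u)$ are nonpositive, so
$$
B(u,u) \leq \int_\Omega |\nabla u|^2 + \lambda u^2 \, dx \leq \max\{1,\lambda\}\,\|u\|_{H^1}^2,
$$
giving $c_2=\max\{1,\lambda\}$.

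The core of the argument is the lower bound, which is where (N) enters decisively. Applying \eqref{hardy1} and \eqref{hardy2} to the subtracted terms gives
$$
\mu\int_\Omega \frac{u^2}{|x|^2}\,dx + \nu\int_\Omega \frac{u^2}{d(x)^2}\,dx \leq \left( \frac{4\mu}{(N-2)^2} + 4\nu \right)\int_\Omega |\nabla u|^2\,dx,
$$
so that, with $\lambda\ge 0$,
$$
B(u,u) \geq \left( 1 - \frac{4\mu}{(N-2)^2} - 4\nu \right)\int_\Omega |\nabla u|^2\,dx + \lambda\int_\Omega u^2\,dx \geq \theta \int_\Omega |\nabla u|^2\,dx,
$$
where $\theta := 1 - 4\bigl(\tfrac{\mu}{(N-2)^2}+\nu\bigr)$. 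The strict inequality in (N) is exactly what forces $\theta>0$; this is the crux of the proof, since the Hardy constants $\tfrac{4}{(N-2)^2}$ and $4$ are sharp and only the smallness condition $\tfrac{\mu}{(N-2)^2}+\nu<\tfrac14$ leaves a positive margin. Finally, since $\Omega$ is bounded and $u\in H^1_0(\Omega)$, the Poincar\'e inequality gives $\int_\Omega|\nabla u|^2\,dx \geq c\,\|u\|_{H^1}^2$ for some $c>0$, whence $B(u,u)\geq \theta c\,\|u\|_{H^1}^2$ and we may take $c_1=\theta c$. In particular $B(u,u)=0$ forces $\|u\|_{H^1}=0$, i.e.\ $u=0$, completing positive definiteness and the norm equivalence. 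The only delicate point is this coercivity step: were one to allow $\lambda<0$, one would have to absorb $\lambda\int_\Omega u^2\,dx$ into $\theta\int_\Omega|\nabla u|^2\,dx$ via Poincar\'e, which remains valid only as long as $\lambda$ is bounded below by $-\theta/C_P$, with $C_P$ the Poincar\'e constant; under the assumption $\lambda\ge 0$ this difficulty does not arise.
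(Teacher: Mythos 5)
Your proposal is correct and follows essentially the same route as the paper: bound the two singular terms via the Hardy inequalities \eqref{hardy1} and \eqref{hardy2}, use the strict inequality in (N) to obtain the coercivity constant $1-\tfrac{4\mu}{(N-2)^2}-4\nu>0$, and conclude with the Poincar\'e inequality; the upper bound is the trivial one since $\mu,\nu\ge 0$. Your additional remarks on finiteness of the mixed terms and on the role of the sign of $\lambda$ are sensible but not a different argument.
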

\begin{proof}
To check that $B$ is positive-definite we utilize \eqref{hardy1}, \eqref{hardy2}, and (N) to get
\begin{align*}
B(u,u)& =  \int_{\Omega} |\nabla u |^2 + \lambda u^2 \, dx - \mu \int_{\Omega} \frac{u^2}{|x|^2} \, dx - \nu \int_\Omega \frac{u^2}{d(x)^2} \, dx\\
&\geq \left(1 - \frac{4 \mu}{(N-2)^2} - 4\nu \right) \int_\Omega |\nabla u|^2 \, dx + \lambda u^2 \, dx \geq \left(1 - \frac{4 \mu}{(N-2)^2} - 4\nu \right) \int_\Omega |\nabla u|^2 \, dx
\end{align*}
and the statement follows from the Poincar\'e inequality. Moreover, from
$$
\int_\Omega |\nabla u|^2 + \lambda u^2 \, dx \geq B(u,u) \geq \left(1 - \frac{4 \mu}{(N-2)^2} - 4\nu \right) \int_\Omega |\nabla u|^2 \, dx
$$
there follows that $B$ generates a norm on $H^1_0(\Omega)$ equivalent to the standard one.
\end{proof}
Let $\| \cdot \|$ denote the norm generated by $B$, namely
$$
\|u\| := \sqrt{B(u,u)}, \quad u \in H^1_0(\Omega).
$$
Then we can define the energy functional $\cJ : H^1_0 (\Omega) \rightarrow \R$ by
\begin{equation}\label{eq:J}
\cJ(u) := \frac12 \|u\|^2 - \int_\Omega F(x,u) \, dx,
\end{equation}
where $G(x,u) := \int_0^u g(x,s) \, ds$ and $F$ is given in (F3). It is well-known that under (F1), (F2) the functional is of $\cC^1$ class and
$$
\cJ'(u)(v) = B(u,v) - \int_\Omega f(x,u)v \, dx, \quad u,v \in H^1_0(\Omega).
$$
Hence, its critical points are weak solutions to \eqref{eq}.

\section{Verification of (J1)--(J4)}

Observe that (F1), (F2) imply that for every $\varepsilon > 0$ one can find $C_\varepsilon > 0$ such that
$$
|f(x,u)| \leq \varepsilon |u| + C_{\varepsilon}|u|^{p-1}.
$$
There follows also a similar inequality for $F$, namely
\begin{equation}\label{eq:F-eps}
F(x,u) \leq \varepsilon u^2 + C_\varepsilon |u|^p.
\end{equation}
We note also, that if in addition (F4) holds, then $F(x,u) \geq 0$. Moreover we recall that the functional $\cJ$ is defined by \eqref{eq:J}.

\begin{Lem}\label{lem:assump}
Suppose that (C), (N), (F1)--(F5) hold. Then $\cJ$ satisfies (J1)--(J3) in Theorem \ref{abstract} and (J4) in Theorem \ref{multipl}.
\end{Lem}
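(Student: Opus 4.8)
The plan is to verify the four geometric conditions separately, the key point being that the choice of the subspace $Q$ in (J2) and of the finite-dimensional spaces in (J4) must be governed by the superlinearity assumption (F3), which only holds on $K$. Throughout I write $\cI(u) = \int_\Omega F(x,u)\,dx$, so that $\cI'(u)(u) = \int_\Omega f(x,u)u\,dx$, and I use that (F4) forces $F \geq 0$ and that $\|\cdot\|$ is equivalent to the usual $H^1_0$-norm, so the embedding $H^1_0(\Omega) \hookrightarrow L^q(\Omega)$, $2 \leq q \leq 2^*$, applies. For (J1) I would start from \eqref{eq:F-eps}: for every $\eps > 0$,
$$
\cJ(u) = \frac12\|u\|^2 - \int_\Omega F(x,u)\,dx \geq \frac12\|u\|^2 - \eps\int_\Omega u^2\,dx - C_\eps\int_\Omega |u|^p\,dx.
$$
Bounding the two integrals by $C\|u\|^2$ and $C\|u\|^p$ via the embedding and norm equivalence gives $\cJ(u) \geq (\tfrac12 - C\eps)\|u\|^2 - C'\|u\|^p$; fixing $\eps$ small and using $p > 2$ yields $\cJ(u) > 0$ on a sphere $\|u\| = r$ with $r$ small, which is (J1).

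For (J3) I would prove the inequality pointwise. Fix $x$ and $s \in \R$ and set $\psi(t) := \tfrac{t^2-1}{2} f(x,s)s - F(x,ts) + F(x,s)$; then $\psi(1) = 0$ and $\psi'(t) = s\big(t f(x,s) - f(x,ts)\big)$. Using that $f(x,\cdot)/|\cdot|$ is nondecreasing on each half-line (F4), a short case analysis on the sign of $s$ shows $\psi'(t) \geq 0$ for $0 < t \leq 1$ and $\psi'(t) \leq 0$ for $t \geq 1$, so $\psi$ increases on $(0,1]$ and decreases on $[1,\infty)$; hence $\psi(t) \leq \psi(1) = 0$ for all $t > 0$. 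Integrating in $x$ with $s = u(x)$ gives exactly (J3) (in fact for every $u$, in particular on $\cN$).

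For (J2) and (J4) — the genuinely superlinear conditions — I would take $Q := H^1_0(\mathrm{int}\, K)$, viewed as the closed subspace of $H^1_0(\Omega)$ of functions vanishing outside $K$; since $|\mathrm{int}\, K| > 0$ this is an infinite-dimensional closed subspace. For (J2), given $t_n \to +\infty$ and $u_n \to u \neq 0$ in $Q$, pass to a subsequence with $u_n \to u$ a.e.; on the positive-measure set $\{u \neq 0\} \subset K$ one has $|t_n u_n| \to \infty$, so by (F3) $\tfrac{F(x,t_n u_n)}{t_n^2} = \tfrac{F(x,t_n u_n)}{|t_n u_n|^2}\,u_n^2 \to +\infty$, and Fatou's lemma (using $F \geq 0$) gives $\tfrac{\cI(t_n u_n)}{t_n^2} \to +\infty$; a subsequence argument upgrades this to the full sequence. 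For (J4) I would take the $\widetilde{E}_k$ to be any increasing family of $k$-dimensional subspaces of $Q$. On such a space all norms are equivalent and the unit sphere is compact, so the Fatou/(F3) argument above yields $\cI(u)/\|u\|^2 \to +\infty$ uniformly as $\|u\| \to \infty$ within $\widetilde{E}_k$; consequently $\cJ(u) = \|u\|^2\big(\tfrac12 - \cI(u)/\|u\|^2\big) \to -\infty$, and choosing $R_k$ large enough gives $\sup_{u \in \widetilde{E}_k,\ \|u\| \geq R_k}\cJ \leq 0$.

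The main obstacle is the superlinear step shared by (J2) and (J4). First, one must restrict to functions supported in $K$: on $\Omega \setminus K$ the nonlinearity is only asymptotically linear by (F5), so along directions living there $\cJ$ need not tend to $-\infty$, and the naive choice of subspaces fails. Second, the limit $\cI(t_n u_n)/t_n^2 \to +\infty$ has to be handled through a.e. convergence and Fatou's lemma rather than pointwise in $t$. Upgrading this pointwise divergence to the \emph{uniform} statement required by (J4) is precisely where the finite dimensionality, through compactness of the unit sphere, becomes essential; by comparison (J1) and (J3) are routine once the pointwise monotonicity of $\psi$ is in place.
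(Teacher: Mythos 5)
Your proposal is correct and follows essentially the same route as the paper: the small-sphere estimate via \eqref{eq:F-eps} for (J1), the choice $Q = H^1_0(\mathrm{int}\,K)$ with Fatou's lemma and (F3) for (J2), the monotonicity of $t \mapsto \frac{t^2-1}{2}f(x,s)s - F(x,ts) + F(x,s)$ via (F4) for (J3), and the finite-dimensional compactness/normalization argument for (J4). The only cosmetic differences are that you prove (J3) pointwise before integrating (the paper differentiates the integrated quantity directly) and you phrase (J4) as uniform divergence of $\cI(u)/\|u\|^2$ rather than as the paper's explicit contradiction with a normalized sequence; both are the same argument.
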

\begin{enumerate}
\item[(J1)] Using \eqref{eq:F-eps} and Sobolev embeddings we obtain
$$
\int_\Om F(x,u)\,dx\leq \varepsilon|u|_2^2+C_\varepsilon |u|_p^p\lesssim \varepsilon\|u\|^2+C_\epsilon\|u\|^p.
$$
Hence can choose $\varepsilon>0$ and $r>0$ such that
$$
\int_\Om F(x,u)\,dx\leq \frac{1}{4}\|u\|^2
$$
for all $\|u\|\leq r$. Then we get
\begin{align*}
\mathcal{J}(u) &= \frac{1}{2}\|u\|^2-\int_\Om F(x,u)\,dx \geq \frac{1}{4}\|u\|^2=\frac{r^2}{4}>0
\end{align*}
for all $\|u\|=r.$
\item[(J2)] Let $Q := H^1_0 (\mathrm{int}\, K)$. Let $t_n\to+\infty$, $u_n \in Q$ and $u_n\to u\neq 0$. Then from Fatou's lemma and (F3)
$$
\frac{\mathcal{I}(t_nu_n)}{t_n^2}=\frac{\int_K F(x,t_nu_n)\,dx}{t_n^2} \to+\infty \quad \mbox{as } n \to +\infty.
$$

\item[(J3)] Fix $u \in \cN$. Define
$$
(0,\infty) \ni t \mapsto \varphi(t):=\frac{t^2-1}{2}\mathcal{I}'(u)(u)-\mathcal{I}(tu)+\mathcal{I}(u)\in\mathbb{R}.
$$
Note that $\varphi(1)=0$. Moreover
\begin{align*}
\varphi'(t)&=t \mathcal{I}'(u)(u)-\mathcal{I}'(tu)(u)=\int_\Om f(x,u)tu\,dx-\int_\Om f(x,tu)u\,dx.
\end{align*}
Suppose that $t\in(0,1)$. Then (F3) implies that for a.e. $x \in \Omega$, $f(x,tu)u \leq tf(x,u)u$ and therefore $\varphi'(t) \geq 0$. Similarly $\varphi'(t)\leq 0$ for $t>1$ which implies that $\varphi(t)\leq\varphi(0)=0$ for all $t>0$.
\item[(J4)] Let $\widetilde{E} \subset H^1_0(\mathrm{int}\,K) \subset H^1_0(\Omega)$ be a finite dimensional subspace. Note that on $\widetilde{E}$ all norms are equivalent. Suppose, by contradiction, that there is a sequence $(u_n) \subset \widetilde{E}$ such that $\|u_n\| \to +\infty$ and $\cJ(u_n) > 0$. Let $w_n(x) := u_n(x) / \|u_n\|$. It is clear that $\|w_n\|=1$ and, since $\widetilde{E}$ is finite-dimensional, there is $w \in \widetilde{E} \setminus \{0\}$ such that $\|w_n-w\| \to 0$. In particular $|\supp (w) \cap K| > 0$. Then, for a.e. $x \in \supp (w) \cap K$ we have that
    $$
    u_n(x)^2 = \|u_n\|^2 w_n(x)^2 \to +\infty.
    $$
    Hence, by the Fatou's lemma and (F3)
    $$
    0 < \frac{\cJ(u_n)}{\|u_n\|^2} = \frac12 - \int_{\Omega} \frac{F(x,u_n)}{\|u_n\|^2} \, dx \leq \frac12 - \int_{\supp(w) \cap K} \frac{F(x,u_n)}{u_n^2} w_n^2 \, dx \to -\infty,
    $$
    which is a contradiction.
\end{enumerate}

\section{Cerami sequences and proofs of main theorems}

\begin{Lem}\label{lem:bdd}
Any Cerami sequence for $\cJ$ is bounded.
\end{Lem}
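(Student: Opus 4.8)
The plan is to argue by contradiction using the normalized sequence. Suppose $\{u_n\}_n$ is a Cerami sequence, so $\cJ(u_n)\to c$ and $(1+\|u_n\|)\cJ'(u_n)\to 0$, but that $\|u_n\|\to+\infty$ along a subsequence. I would set $v_n:=u_n/\|u_n\|$, so that $\|v_n\|=1$ and, passing to a further subsequence, $v_n\weakto v$ in $H^1_0(\Omega)$, $v_n\to v$ in $L^q(\Omega)$ for every $q\in[2,2^*)$ and $v_n\to v$ almost everywhere. The strategy is to first prove that necessarily $v=0$ (treating the superlinear region $K$ and the asymptotically linear region $\Omega\setminus K$ separately, since $f$ behaves differently there), and only afterwards to extract a contradiction with $\|v_n\|=1$.

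On $K$ I would divide $\cJ(u_n)\to c$ by $\|u_n\|^2$. Since $F\ge 0$ by (F4),
$$
\frac{\cJ(u_n)}{\|u_n\|^2}=\frac12-\int_\Om\frac{F(x,u_n)}{\|u_n\|^2}\,dx\le\frac12-\int_K\frac{F(x,u_n)}{u_n^2}v_n^2\,dx,
$$
so if $|\{x\in K:v(x)\ne0\}|>0$ then $|u_n(x)|\to+\infty$ there, and (F3) together with Fatou's lemma would send the right-hand side to $-\infty$, contradicting $\cJ(u_n)/\|u_n\|^2\to0$; hence $v=0$ a.e. on $K$. On $\Omega\setminus K$, suppose $v\ne0$. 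Testing against $\varphi\in\cC_0^\infty(\Omega\setminus K)$ and dividing by $\|u_n\|$ gives
$$
\frac{\cJ'(u_n)(\varphi)}{\|u_n\|}=B(v_n,\varphi)-\int_{\Omega\setminus K}\frac{f(x,u_n)}{u_n}v_n\varphi\,dx\longrightarrow 0,
$$
where I use that $f(x,u)/u$ is bounded (by (F1), (F2), (F5)) and, by (F5), equals $\Theta(x)$ wherever $|u_n|\to+\infty$, so that $\frac{f(x,u_n)}{u_n}v_n\to\Theta v$ in $L^2(\supp\varphi)$ by dominated convergence, while $B(v_n,\varphi)\to B(v,\varphi)$ by weak convergence. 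In the limit $v|_{\Omega\setminus K}\in H^1_0(\Omega\setminus K)$ (its trace on $\partial K$ vanishes because $v=0$ on $K$) is a nontrivial weak solution of $\cA v=-\lambda v$, i.e. $-\lambda\in\sigma(\cA)$, which contradicts (A). Therefore $v=0$.

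With $v=0$ I would close the argument as follows. Then $v_n\to0$ in $L^2(\Omega)$ and in $L^p(\Omega)$, so by \eqref{eq:F-eps} one has $\int_\Om F(x,Rv_n)\,dx\to0$ for every fixed $R>0$, whence $\cJ(Rv_n)\to R^2/2$. For fixed $R$ and $n$ large, $t_n:=R/\|u_n\|\in(0,1)$, and the monotonicity inequality underlying (J3)—whose proof in fact never uses $u\in\cN$ and so holds for every $u\in H^1_0(\Omega)$—yields
$$
\frac{R^2}{2}+o(1)=\cJ(Rv_n)=\cJ(t_nu_n)\le\cJ(u_n)+\frac{t_n^2-1}{2}\cJ'(u_n)(u_n).
$$
Since $\cJ(u_n)\to c$ and $\cJ'(u_n)(u_n)\to0$ by the Cerami property while $\tfrac{t_n^2-1}{2}$ stays bounded, the right-hand side tends to $c$, forcing $R^2/2\le c$ for all $R>0$, a contradiction. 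I expect the genuine obstacle to be the middle step: making the limit passage on $\Omega\setminus K$ rigorous and identifying $v$ as an honest eigenfunction of $\cA$ in $H^1_0(\Omega\setminus K)$, since this is exactly where the non-resonance hypothesis (A) is consumed. By comparison the superlinear region is handled cheaply by (F3) and Fatou, and the final step merely repackages the monotonicity already recorded in (J3).
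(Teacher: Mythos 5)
Your proof is correct and follows essentially the same route as the paper: the same three ingredients appear — Fatou plus (F3) on the superlinear region, the non-resonance condition (A) via testing with $\varphi\in\cC_0^\infty(\Omega\setminus K)$ on the asymptotically linear region, and the (J3) monotonicity inequality (valid for all $u$, as you correctly observe) to rule out vanishing. Recasting the paper's three-case split as ``first show $v=0$, then derive the contradiction'' is only a cosmetic reorganization, and your boundedness of $f(x,u)/u$ on $\Omega\setminus K$ (via (F1), (F2), (F5) rather than the paper's (F4)+(F5)) is an equally valid justification of the same dominated-convergence step.
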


\begin{proof}
Suppose that $\|u_n\|\to+\infty$ up to a subsequence. We define $v_n = \frac{u_n}{\|u_n\|}$. Then $\|v_n\|=1$ and $v_n \weakto v_0$ in $H^1_0(\Omega)$. From compact Sobolev embeddings, $v_n \to v_0$ in $L^2(\Omega)$, in $L^p (\Omega)$ and almost everywhere.

We consider three cases.

\begin{itemize}
\item Suppose that $v_0 = 0$. Condition (J3) implies that
$$
\cJ(u) \geq \cJ(tu) - \frac{t^2-1}{2} \cJ'(u)(u). 
$$
Taking $t\mapsto\frac{t}{\|u_n\|}$ we obtain that 
$$
\cJ(u_n) \geq \cJ\left(\frac{t}{\|u_n\|}u_n\right) - \frac{\frac{t^2}{\|u_n\|^2}-1}{2} \cJ'(u_n)(u_n) = \cJ(t v_n) + o(1).
$$
Hence
$$
\cJ(u_n) \geq \frac{t^2}{2} - \int_\Omega F(tv_n) \, dx + o(1).
$$
Moreover 
$$
\left| \int_\Omega F(tv_n) \, dx \right| \leq \varepsilon t^2 \int_\Omega |v_n|^2 \, dx + C_\varepsilon t^p \int_\Omega |v_n|^p \, dx \to 0 \quad \mbox{as } n \to \infty.
$$
Thus
$$
c + o(1) = \cJ(u_n) \geq \frac{t^2}{2} + o(1)
$$
for any $t > 0$ - a contradiction.
\item Now we suppose that $v_0 \neq 0$ and $|\supp v_0 \cap K | > 0$. Then
$$
o(1)=\frac{\cJ(u_n)}{\|u_n\|^2} = \frac12 - \int_\Omega \frac{F(u_n)}{\|u_n\|^2} \, dx = \frac12 - \int_\Omega \frac{F(u_n)}{u_n^2} v_n^2 \, dx \leq \frac12 - \int_{\supp v_0 \cap K} \frac{F(u_n)}{u_n^2} v_n^2 \, dx \to -\infty,
$$
a contradiction.
\item Suppose that $v_0 \neq 0$ and $|\supp v_0 \cap K| = 0$. Then $\supp v_0 \subset \Omega \setminus K$. Fix $\varphi \in \cC_0^\infty (\Omega \setminus K)$ and note that
$$
o(1) = \cJ'(u_n)(\varphi) = \langle u_n, \varphi \rangle - \int_{\Omega} f(x,u_n) \varphi \, dx.
$$
Observe that 
\begin{align*}
    \int_{\Omega} f(x,u_n)\varphi \, dx = \|u_n\| \int_{\Omega} \frac{f(x,u_n)}{u_n} v_n \varphi \, dx = \|u_n\| \left( \int_{\supp \varphi \cap \supp v} \frac{f(x,u_n)}{u_n} v_n \varphi \, dx + o(1) \right)
\end{align*}
Observe that for a.e. $x \in \supp \varphi \cap \supp v$ we get that $|u_n(x)| = |v_n(x)| \|u_n\| \to +\infty$. Fix $x \in \supp \varphi \cap \supp v$; then from (F5), for sufficiently large $n$, $f(x,u_n(x)) = \Theta(x) u_n(x)$. Thus
$$
\frac{f(x,u_n(x))}{u_n(x)} v_n(x) \varphi(x) = \Theta(x) v_n(x) \varphi(x)
$$
and therefore
$$
\frac{f(x,u_n(x))}{u_n(x)} v_n(x) \varphi(x) \to \Theta(x) v_n(x) \varphi(x)
$$
pointwise, a.e. on $\supp \varphi \cap \supp v$. Combining (F4) and (F5) we also get that
$$
\left| \frac{f(x,u_n(x))}{u_n(x)}  \right|^2 \leq |\Theta|_\infty^2
$$
and $\frac{f(\cdot ,u_n)}{u_n} \to \Theta$ in $L^2(\supp \varphi \cap \supp v)$. Thus, from Lebesgue dominated convergence theorem and the H\"older inequality we get
$$
\int_{\supp \varphi \cap \supp v} \frac{f(x,u_n)}{u_n} v_n \varphi \, dx \to \int_{\Omega} \Theta(x) v \varphi \, dx.
$$
Hence
$$
\langle v, \varphi \rangle = \int_\Omega \Theta(x) v \varphi \, dx.
$$
In particular, $0$ is an eigenvalue of the operator $-\Delta + \lambda - \frac{\mu}{|x|^2} - \frac{\nu}{d^2} - \Theta(x)$ with Dirichlet boundary conditions on $\Omega \setminus K$, which contradicts (A).
\end{itemize}

\end{proof}

\begin{proof}[Proof of Theorem \ref{th:main1}]
Since Cerami sequence $u_n$ is bounded we have following convergences (up to a subsequence):
\begin{align*}
u_n\weakto u_0 \quad & \mbox{in } H^1_0(\Omega),\\
u_n\to u_0 \quad & \mbox{in } L^2(\Om), \mbox{ and in } L^p(\Om),\\
u_n\to u_0\quad & \mbox{a.e. on }\Om.
\end{align*}
Hence, for any $\varphi\in \cC^\infty_0(\Omega)$,
$$
\cJ'(u_n)(\varphi)-\cJ'(u_0)(\varphi)=\langle u_n-u_0,\varphi\rangle-\int_\Om \left(f(x,u_n)-f(x,u_0)\right)\varphi\,dx\to 0,
$$
because obviously weak convergence of $u_n$ implies that
$$
\langle u_n-u_0,\varphi\rangle\to 0,
$$
and we will use the Vitali convergence theorem to prove that
$$ 
\int_\Om \left(f(x,u_n)-f(x,u_0)\right)\varphi\,dx\to 0.
$$
Hence we need to check the uniform integrability of the family $\left\{ \left(f(x,u_n)-f(x,u_0)\right)\varphi \right\}_n$. Using (F1) and Lemma \ref{lem:bdd} we obtain that for any measurable set $E \subset \Omega$
\begin{align*}
\int_E|f(x,u_n)-f(x,u_0)|\varphi\,dx &\leq  \int_E|f(x,u_n)\varphi|\,dx+\int_E|f(x,u_0)\varphi|\,dx\\
&\lesssim  \int_E|\varphi|\,dx+\int_E|u_n|^{p-1}|\varphi|\,dx+\int_E|\varphi|\,dx+\int_E|u_0|^{p-1}|\varphi|\,dx\\
&\lesssim  |\varphi \chi_E|_1+ |u_n|_p^{p-1} |\varphi \chi_E|_p  +|u_0|_p^{p-1} |\varphi \chi_E|_p \\
&\lesssim |\varphi \chi_E|_1+|\varphi \chi_E|_p.
\end{align*}
Then, for any $\varepsilon > 0$, we can choose $\delta > 0$ small enough that
$$
\int_E|f(x,u_n)-f(x,u_0)|\varphi\,dx < \varepsilon
$$
for $|E|<\delta$. Hence $\cJ'(u_n)(\varphi)\to\cJ'(u_0)(\varphi)$, and $\cJ'(u_0)=0$.
\end{proof}

\begin{proof}[Proof of Theorem \ref{th:main2}]
The statement follows directly from Theorem \ref{multipl} and Lemma \ref{lem:assump}.
\end{proof}

\section{Multiple solutions to the mass-subcritical normalized problem}

In what follows we are interested in the normalized problem \eqref{eq:normalized}, where $\lambda$ is not prescribed anymore and is the part of the unknown $(\lambda, u) \in \R \times H^1_0(\Omega)$. Then, solutions are critical point of the energy functional 
$$
\cJ_0 (u) := \frac12 |\nabla u|_2^2 - \frac{\mu}{2} \int_\Omega \frac{u^2}{|x|^2} \, dx - \frac{\nu}{2} \int_\Omega \frac{u^2}{d(x)^2} \, dx - \int_\Omega F(x,u) \, dx
$$
restricted to the $L^2$-sphere in $H^1_0(\Omega)$
$$
\cS := \left\{ u \in H^1_0(\Omega) \ : \ \int_\Omega u^2 \, dx = \rho \right\}
$$
and $\lambda$ arises as a Lagrange multiplier.

We recall the well-known Gagliardo-Nirenberg inequality
\begin{equation}\label{gn-ineq}
|u|_p \leq C_{p,N} |\nabla u|_2^{\delta_p} |u|_2^{1-\delta_p}, \quad u \in H^1_0(\Omega),
\end{equation}
where $\delta_p := N \left(\frac12 - \frac{1}{p} \right)$ and $C_{p,N} > 0$ is the optimal constant.

\begin{Lem}\label{coercive}
$\cJ_0$ is coercive and bounded from below on $\cS$.
\end{Lem}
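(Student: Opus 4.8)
The plan is to reduce everything to a single-variable estimate in the gradient norm $|\nabla u|_2$, exploiting that on $\cS$ the $L^2$-norm is frozen at $|u|_2^2 = \rho$. First I would absorb the two singular potential terms into the Dirichlet term: combining the Hardy inequalities \eqref{hardy1} and \eqref{hardy2} with (N) gives
$$
\frac12 |\nabla u|_2^2 - \frac{\mu}{2}\int_\Omega \frac{u^2}{|x|^2}\,dx - \frac{\nu}{2}\int_\Omega \frac{u^2}{d(x)^2}\,dx \geq \frac{\theta}{2}|\nabla u|_2^2, \qquad \theta := 1 - \frac{4\mu}{(N-2)^2} - 4\nu > 0.
$$
Thus the quadratic part of $\cJ_0$ is bounded below by a positive multiple of $|\nabla u|_2^2$, and it remains to control $\int_\Omega F(x,u)\,dx$ from above.

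For the nonlinear term I would use only (F1): integrating $|f(x,u)| \lesssim 1 + |u|^{p-1}$ yields $|F(x,u)| \lesssim |u| + |u|^p$, hence $\int_\Omega F(x,u)\,dx \lesssim |u|_1 + |u|_p^p$. On $\cS$ the first term is harmless, since $|u|_1 \leq |\Omega|^{1/2}|u|_2 = |\Omega|^{1/2}\rho^{1/2}$ is a fixed constant. For the second term I apply the Gagliardo--Nirenberg inequality \eqref{gn-ineq}, which gives $|u|_p^p \leq C_{p,N}^p |\nabla u|_2^{p\delta_p} |u|_2^{p(1-\delta_p)} = C \rho^{p(1-\delta_p)/2}|\nabla u|_2^{p\delta_p}$ on $\cS$; here $\delta_p \in (0,1)$ because $2 < p < 2^*$, so the inequality is legitimate.

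The decisive point is the exponent: a direct computation gives $p\delta_p = \frac{N(p-2)}{2}$, and the mass-subcritical hypothesis $p < 2_* = 2 + \frac{4}{N}$ is exactly what forces $p\delta_p < 2$. Collecting the estimates, there are constants $C_1, C_2 > 0$ with
$$
\cJ_0(u) \geq \frac{\theta}{2}|\nabla u|_2^2 - C_1 |\nabla u|_2^{p\delta_p} - C_2 \qquad \text{for all } u \in \cS,
$$
and since $p\delta_p < 2$ the quadratic term dominates as $|\nabla u|_2 \to +\infty$. The right-hand side is therefore bounded below on $[0,\infty)$ and tends to $+\infty$; because $\|u\|_{H^1}^2 = |\nabla u|_2^2 + \rho$ on $\cS$, this is precisely coercivity and boundedness from below of $\cJ_0$ on $\cS$.

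There is no serious obstacle here: the only thing that must be checked with care is the strict inequality $p\delta_p < 2$, which is equivalent to mass-subcriticality and is the sole place where the restriction $p < 2_*$ (rather than $p < 2^*$) is used. The remaining steps are routine applications of H\"older's inequality, the Hardy inequalities already established, and the elementary fact that $t \mapsto \frac{\theta}{2}t^2 - C_1 t^{p\delta_p} - C_2$ is bounded below whenever $p\delta_p < 2$.
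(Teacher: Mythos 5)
Your proof is correct and follows essentially the same route as the paper: absorb the singular terms via the Hardy inequalities \eqref{hardy1}--\eqref{hardy2} and (N), bound $\int_\Omega F(x,u)\,dx$ using (F1) and the Gagliardo--Nirenberg inequality \eqref{gn-ineq}, and observe that $p\delta_p<2$ precisely because $p<2_*$. Your version even fixes a sign typo in the paper's final display (the term $C|\nabla u|_2^{\delta_p p}$ should of course enter with a minus sign).
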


\begin{proof}
Using (F1), \eqref{hardy1}, \eqref{hardy2} and \eqref{gn-ineq}, we obtain
\begin{align*}
\cJ_0(u) &=  \frac12 |\nabla u|_2^2 - \frac{\mu}{2} \int_\Omega \frac{u^2}{|x|^2} \, dx - \frac{\nu}{2} \int_\Omega \frac{u^2}{d(x)^2} \, dx - \int_\Omega F(x,u) \, dx  \\
&\geq \frac12 \left(1 - \frac{4\mu}{(N-2)^2} - 4\nu \right) |\nabla u|_2^2 -C_1 |\Om| - C_1|u|_p^p \\
&\geq \frac12 \left(1 - \frac{4\mu}{(N-2)^2} - 4\nu \right) |\nabla u|_2^2 - C_1|\Om|- C\left(|\nabla u|_2^{\delta_p} |u|_2^{1-\delta_p} \right)^p\\
&\geq \frac12 \left(1 - \frac{4\mu}{(N-2)^2} - 4\nu \right) |\nabla u|_2^2 -C_1|\Om|+ C|\nabla u|_2^{\delta_pp},
\end{align*}
where 
$$
\delta_p p = N \left( \frac12 - \frac1p \right) p = N \left( \frac{p}{2}-1 \right) < N \cdot \frac{2}{N} = 2.
$$
Thus $\cJ_0$ is coercive and bounded from below on $\cS$.
\end{proof}

\begin{Lem}
$\cJ_0$ satisfies the Palais-Smale condition on $\cS$, i.e. any Palais-Smale sequence for $\cJ_0 |_\cS$ has a convergent subsequence.
\end{Lem}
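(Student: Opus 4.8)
\emph{Proof plan.} Let $(u_n)\subset\cS$ be a Palais--Smale sequence for $\cJ_0|_\cS$, so that $\cJ_0(u_n)$ is bounded and $(\cJ_0|_\cS)'(u_n)\to 0$. The first step is boundedness: by Lemma \ref{coercive} the functional $\cJ_0$ is coercive on $\cS$, so boundedness of $\cJ_0(u_n)$ forces $|\nabla u_n|_2$ to be bounded, and together with the constraint $|u_n|_2^2=\rho$ this shows that $(u_n)$ is bounded in $H^1_0(\Omega)$. Passing to a subsequence I then obtain $u_n\weakto u_0$ in $H^1_0(\Omega)$, and since $p<2^*$ the compact Sobolev embeddings give $u_n\to u_0$ in $L^2(\Omega)$ and in $L^p(\Omega)$, as well as a.e. on $\Omega$.

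Next I would record the Lagrange multiplier. Since $(\cJ_0|_\cS)'(u_n)\to 0$, there exist $\lambda_n\in\R$ with
\[
\cJ_0'(u_n)(\varphi)-\lambda_n\int_\Omega u_n\varphi\,dx = o(1)\,\|\varphi\|,\qquad \varphi\in H^1_0(\Omega).
\]
Testing with $\varphi=u_n$ and bounding each term — the singular quadratic terms by the Hardy inequalities \eqref{hardy1}, \eqref{hardy2} and the nonlinear term by (F1), using that $(u_n)$ is bounded in $L^2\cap L^p$ — shows that $\cJ_0'(u_n)(u_n)$ is bounded; since $\lambda_n\rho=\cJ_0'(u_n)(u_n)+o(1)$, the multipliers $\lambda_n$ are bounded, so (along a further subsequence) $\lambda_n\to\lambda_0$. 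For the present argument only boundedness of $\lambda_n$ is actually needed.

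The core of the proof is the upgrade from weak to strong convergence, and here the natural device is to absorb the singular potential into an equivalent inner product. Set
\[
\wt B(u,v):=\int_\Omega \nabla u\cdot\nabla v\,dx-\mu\int_\Omega\frac{uv}{|x|^2}\,dx-\nu\int_\Omega\frac{uv}{d(x)^2}\,dx,
\]
which by (N), the Hardy inequalities \eqref{hardy1}, \eqref{hardy2} and the Poincar\'e inequality is an inner product on $H^1_0(\Omega)$ whose norm is equivalent to the standard one (this is the $\lambda=0$ instance of the Lemma in Section 3). Testing the approximate equation with $\varphi=u_n-u_0$ gives
\[
\wt B(u_n,u_n-u_0)=\int_\Omega f(x,u_n)(u_n-u_0)\,dx+\lambda_n\int_\Omega u_n(u_n-u_0)\,dx+o(1).
\]
The right-hand side tends to $0$: the nonlinear term vanishes because $f(\cdot,u_n)$ is bounded in $L^{p'}(\Omega)$ with $p'=p/(p-1)$ by (F1) while $u_n\to u_0$ in $L^p(\Omega)$, and the multiplier term vanishes because $\lambda_n$ is bounded while $u_n\to u_0$ in $L^2(\Omega)$. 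Splitting $\wt B(u_n,u_n-u_0)=\wt B(u_n-u_0,u_n-u_0)+\wt B(u_0,u_n-u_0)$ and noting that weak convergence in $(H^1_0(\Omega),\|\cdot\|_{H^1})$ coincides with weak convergence for the equivalent inner product $\wt B$, so that $\wt B(u_0,u_n-u_0)\to 0$, I conclude $\wt B(u_n-u_0,u_n-u_0)\to 0$, i.e. $u_n\to u_0$ in $H^1_0(\Omega)$.

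The main obstacle is precisely the two singular Hardy terms: a direct estimate of $\int \frac{u_n(u_n-u_0)}{|x|^2}$ and $\int \frac{u_n(u_n-u_0)}{d(x)^2}$ is awkward because these integrands are not controlled by the compact $L^2$/$L^p$ convergence alone. Folding them into the coercive inner product $\wt B$ sidesteps this: the delicate quadratic information is carried entirely by $\wt B$ (handled by coercivity and by weak lower semicontinuity through $\wt B(u_0,u_n-u_0)\to0$), while only genuinely lower-order, compact terms remain on the right-hand side.
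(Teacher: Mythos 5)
Your proposal is correct and follows essentially the same route as the paper: coercivity gives boundedness, the Lagrange multipliers $\lambda_n$ are shown to be bounded via (F1), and strong convergence is recovered by testing the approximate equation against $u_n-u_0$ (the paper equivalently computes $\cJ_0'(u_n)(u_n)-\cJ_0'(u_n)(u_0)$) and absorbing the two Hardy terms into the equivalent inner product. Your explicit introduction of $\wt B$ just makes precise what the paper does implicitly with the notation $\|\cdot\|$ and $\langle\cdot,\cdot\rangle$.
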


\begin{proof}
Let $(u_n) \subset \cS$ be a Palais-Smale sequence for $\cJ_0 |_\cS$. Then Lemma \ref{coercive} implies that $(u_n)$ is bounded in $H_0^1 (\Omega)$. Hence we may assume that (up to a subsequence)
\begin{align*}
u_n \rightharpoonup u \quad & \mbox{in } H^1_0(\Omega), \\
u_n \to u \quad & \mbox{in } L^p (\Omega), \\
u_n \to u \quad & \mbox{a.e. on } \Omega.
\end{align*}
Moreover
$$
\cJ_0'(u_n) + \lambda_n u_n \to 0 \quad \mbox{in } H^{-1}(\Omega) := (H^1_0 (\Omega))^*
$$
for some $\lambda_n \in \R$. In particular
$$
\cJ_0'(u_n)(u_n) + \lambda_n |u_n|_2^2 \to 0.
$$
Note that
$$
\lambda_n = - \frac{\cJ_0'(u_n)(u_n)}{|u_n|_2^2}  + o(1)= - \frac{\|u_n\|^2 - \int_\Omega f(x,u_n)u_n \, dx}{|u_n|_2^2} + o(1).
$$
Observe that, from (F1)
$$
\left| \int_\Omega f(x,u_n)u_n \, dx \right| \lesssim 1 + |u_n|_p^p \lesssim 1.
$$
Therefore $(\lambda_n) \subset \R$ is bounded, and (up to a subsequence) $\lambda_n \to \lambda_0$. Therefore, up to a subsequence,
\begin{align*}
    o(1) &= \cJ_0'(u_n)(u_n) + \lambda_n |u_n|_2^2 - \cJ_0'(u_n)(u) - \lambda_n \int_{\Omega} u_n u \, dx \\
    &= \cJ_0'(u_n)(u_n) - \cJ_0'(u_n)(u) + \lambda_n \int_{\Omega} u_n (u_n-u) \, dx \\
    &= \|u_n\|^2 - \langle u_n, u \rangle - \int_{\Omega} f(x,u_n) (u_n-u) \, dx +  \lambda_n \int_\Omega u_n (u_n-u) \, dx.
\end{align*}
It is clear that $\langle u_n, u\rangle \to \|u\|^2$ and that 
$$
\left| \lambda_n \int_\Omega u_n (u_n-u) \, dx \right| \lesssim |u_n|_2^2 |u_n-u|_2^2 \to 0.
$$
Moreover, from (F1)
\begin{align*}
    \left| \int_\Omega f(x,u_n)(u_n-u) \, dx \right| \lesssim |u_n-u|_2 + |u_n|_p^{p-1} |u_n-u|_p \to 0.
\end{align*}
Hence $\|u_n\| \to \|u\|$ and therefore $u_n \to u$ in $H^1_0(\Omega)$.
\end{proof}

\begin{proof}[Proof of Theorem \ref{main:3}]
From \cite[Theorem II.5.7]{Str} we obtain that $\cJ_0$ has at least $\hat{\gamma}(\cS)$ critical points, where
$$
\hat{\gamma} (\cS) := \sup \{ \gamma(K) \ : \ K \subset \cS \mbox{ - symmetric and compact} \}
$$
and $\gamma$ denotes the Krasnoselskii's genus for symmetric and compact sets. We will show that $\hat{\gamma}(\cS) = +\infty$. Indeed, fix $k \in \mathbb{N}$. It is sufficient to construct a symmetric and compact set $K \subset \cS$ with $\gamma(K) = k$. Choose functions $w_1, w_2, \ldots, w_k \in \cC_0^\infty (\Omega) \cap \cS$ with pairwise disjoint supports, namely $w_i w_j = 0$ for $i \neq j$. Now we set
$$
K := \left\{ \sum_{i=1}^k \alpha_i w_i \in \cS \ : \  \sum_{i=1}^k \alpha_i^2 = 1 \right\}.
$$
It is clear that $K \subset \cS$ is symmetric and compact. We will show that $\gamma(K) = k$. In what follows $S^{m-1}$ denotes the $(m-1)$-dimensional sphere in $\R^m$ of radius $1$ centered at the origin. Note that $h : K \rightarrow S^{k-1}$ given by
$$
K \ni \sum_{i=1}^k \alpha_i w_i \mapsto h \left( \sum_{i=1}^k \alpha_i w_i \right) := (\alpha_1, \ldots, \alpha_k) \in S^{k-1}
$$
is a homeomorphism, which is odd. Hence $\gamma(K) \leq k$. Suppose by contradiction that $\gamma(K) < k$. Then there is a continuous and odd function $\widetilde{h} : K \rightarrow S^{\gamma(K) -1}$. However, $\widetilde{h} \circ h^{-1} : S^{k-1} \rightarrow S^{\gamma(K)-1}$ is an odd, continuous map, which contradicts the Borsuk-Ulam theorem \cite[Proposition II.5.2]{Str}, \cite[Theorem D.17]{Willem}. Hence $\gamma(K) = k$.
\end{proof}

\section*{Acknowledgements}

Adam Konysz was partly supported by the Excellence Initiative - Research University at Nicolaus Copernicus University (Grants4NCUStudents 90-SIDUB.6102.18.2021.G4NCUS2).  Moreover, some parts of this work were completed during his short research stays at Institute of Mathematics, Polish Academy of Sciences; he wishes to thank for the warm hospitality. Bartosz Bieganowski and Adam Konysz were also partly supported by National Science Centre, Poland (Grant No. 2017/26/E/ST1/00817).

\end{document}